\def\di{\displaystyle}
\newtheorem{theorem}{Theorem}[section]
\newtheorem{lemma}[theorem]{Lemma}
\newtheorem{definition}[theorem]{Definition}
\begin{document}
\setcounter{tocdepth}{3}
\baselineskip 6mm
\date{}
\title{Non standard finite difference scheme preserving dynamical properties}
\author{Jacky Cresson$^{1,2}$}
\address{Laboratoire de Math\'ematiques Appliqu\'ees de Pau, Universit\'e de Pau et des Pays de l'Adour,\\
avenue de l'Universit\'e, BP 1155, 64013 Pau Cedex, France}
\address{SYRTE, Observatoire de Paris, 77 avenue Denfert-Rochereau, 75014 Paris, France}
\author{Fr\'ed\'eric Pierret$^2$}
\address{SYRTE, Observatoire de Paris, 77 avenue Denfert-Rochereau, 75014 Paris, France}

\keywords{Non-standard finite difference methods, qualitative behaviour, qualitative dynamics preserving numerical scheme}

\begin{abstract}
We study the construction of a non-standard finite differences numerical scheme for a general class of two dimensional differential equations including several models in population dynamics using the idea of non-local approximation introduced by R. Mickens. We prove the convergence of the scheme, the unconditional, with respect to the discretisation parameter, preservation of the fixed points of the continuous system and the preservation of their stability nature. Several numerical examples are given and comparison with usual numerical scheme (Euler, Runge-Kutta of order 2 or 4) are detailed.
\end{abstract}

\maketitle

\vskip 5mm
\begin{tiny}
\begin{enumerate}
\item {Laboratoire de Math\'ematiques Appliqu\'ees de Pau, Universit\'e de Pau et des Pays de l'Adour, avenue de l'Universit\'e, BP 1155, 64013 Pau Cedex, France}

\item {SYRTE UMR CNRS 8630, Observatoire de Paris and University Paris VI, France}
\end{enumerate}
\end{tiny}

\tableofcontents

\section{Introduction}

Differentials equations are in general difficult to solve and study. In particular, for most of them we do not know explicit solutions. As a consequence, one is lead to perform numerical experiments using some "integrators" like the Euler or Runge-Kutta numerical scheme. The construction of these methods is based on approximation theory and focus on the way to produce finite representation of functions. Although crucial to obtain good agreements between a given solution and its approximation, it is far from being sufficient. Indeed, these numerical methods produce artefacts, i.e. numerical behaviour which are not present in the given model. Examples of these artefacts are : creation of ghost equilibrium points, change in the stability nature of existing equilibrium point or destruction of domain invariance, etc. \\

These issues are of course of fundamental importance and give in fact a way to solve it. Indeed, the artefacts produced by classical numerical methods are related to the non persistence of some important features of the dynamics generated by the differential equation. In particular, the qualitative theory of differential equations is mainly concerned with invariant objects like equilibrium points and there dynamical properties like stability or instability as well as other global properties like domain invariance, variational structures, etc. As a consequence, an idea is to construct numerical scheme not focusing on approximation problems but dealing with the respect of some dynamical informations leading to what can be called {\it qualitative dynamical numerical scheme}.\\

This program was in fact mainly developed by R. Mickens in a serie of papers (see \cite{mickens1994} \cite{mickens2003}, \cite{mickens2005}). In order to distinguish the new numerical scheme from the classical one, he gives tha name of nonstandard scheme to them.\\

The aim of this paper is to introduce a nonstandard scheme concerning a class of differential equations which cover for example all the type of the so called prey-predator models. A lot of study of nonstandard scheme for prey-predator models has been done but only with specific form of the differential equations (see \cite{dimitrov2006}, \cite{dimitrov2007}, \cite{dimitrov2008}). Our results generalize the one obtained by D.T. Dimitrov and H:V: Kojouharov in \cite{dimitrov2006}.\\

The plan of the paper is as follows : \\

In section 2, we remind classical definitions about equilibrium points and their stability for discrete and continuous dynamical systems. Section 3 gives the definition of a non-standard finite difference scheme following R. Anguelov and J.M-S. Lubuma (\cite{al2000,al2001}). In section 4, we introduce the class of two dimensional differential equations that we are considering and we study the positivity and the stability of the equilibrium points of this class of differential equations. In section 5, we introduce the non-standard scheme associate to this system with results about the preservation of stability and positivity of the initial problem. In section 6, we illustrate numerically the results on different models. In Section 7 we conclude and give some perspectives.

\section{Reminder about continuous/discrete dynamical systems}

In this Section, we remind classical results about continuous and discrete dynamical systems dealing with the {\it qualitative} behaviour of ordinary differential equations which will be studied both for our class of models and their discretisation. We refer in particular to the book of S. Wiggins \cite{wiggins} for more details and proofs.

\subsection{Vector fields}

\subsubsection{Equilibrium points and stability}

Consider a general autonomous differential equation 
\begin{equation}
\label{equagen}
\frac{dx(t)}{dt} =  f(x(t)), \quad x \in \mathbb{R}^n ,
\end{equation}
where $f \in C^2(\mathbb{R}^m,\mathbb{R}^m)$ is called the {\it vector fields} associated to (\ref{equagen}). \\

An {\it equilibrium solution} of \eqref{equagen} is a point $E\in \mathbb{R}^n$ such that $f(E)=0$. We denote by $\mathcal{F}$ the set of equilibrium points of (\ref{equagen}).\\

An important issue is to be understand the dynamics of trajectories in the neighbourhood of a given equilibrium point. This is done through different notions of {\it stability}. In our model, we will use mainly the notion of {\it asymptotic stability} which is a stronger notion than the usual {\it Liapounov stability}.

\begin{definition}[Liapounov stability]
A solution $x(t)$ of (\ref{equagen}) is said to be stable if, given $\epsilon >0$, there exists $\delta=\delta (\epsilon )>0$ such that, for any other solution, $y(t)$, of (\ref{equagen}) satisfying $\parallel x(t_0)- y(t_0)  
\parallel < \delta$, then $\parallel x(t) -y(t) \parallel <\epsilon$ for $t>t_0$, $t_0 \in \mathbb{R}$.
\end{definition}

Our main concern will be {asymptotic stability}.

\begin{definition}[Asymptotic stability] A solution $x(t)$ of (\ref{equagen}) is said to be asymptotically stable if it is Liapounov stable and for any other solution, $y(t)$, of (\ref{equagen}), there exists a constant $\delta >0$ such that if $\parallel x(t_0) -y(t_0)  
\parallel < \delta$, then $\di\lim_{t\rightarrow +\infty} \parallel x(t) -y(t) \parallel =0$.
\end{definition}

For an equilibrium $E$, an important result is that asymptotic stability can be determined from the associated {\it linear system} defined by 
\begin{equation}
\frac{dy}{dt} = Df(E) y ,
\end{equation}
where $Df(E)$ is the Jacobian of $f$ evaluated at point $E$.\\

Precisely, we have (see \cite{wiggins}, Theorem 1.2.5 p.11):

\begin{theorem}
\label{stablin}
Let $E$ be an equilibrium point of (\ref{equagen}). Assume that all the eigenvalues of $Df(E)$ have negative real parts. Then the equilibrium point $E$ is asymptotically stable.
\end{theorem}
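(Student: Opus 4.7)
The plan is to reduce to the linear part plus a perturbation, and then use either a Gronwall-type argument on the variation of constants formula, or construct a quadratic Lyapunov function from the Lyapunov matrix equation. I outline the variation of constants approach since it is the most direct.

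First I would translate the equilibrium to the origin by setting $u(t) = x(t) - E$, which transforms \eqref{equagen} into $\dot u = Au + g(u)$, where $A = Df(E)$ and $g(u) = f(E+u) - Au$. Since $f \in C^2$, Taylor's formula gives $g(0) = 0$, $Dg(0) = 0$, and in particular for every $\eta > 0$ there exists $r > 0$ such that $\|g(u)\| \leq \eta \|u\|$ whenever $\|u\| \leq r$.

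Next, I would exploit the spectral hypothesis: since every eigenvalue of $A$ has strictly negative real part, there exist constants $M \geq 1$ and $\alpha > 0$ (pick any $\alpha$ smaller than the smallest of the absolute values of the real parts) such that the matrix exponential satisfies $\|e^{At}\| \leq M e^{-\alpha t}$ for all $t \geq 0$. This is a standard consequence of the Jordan form. Using Duhamel's formula, any solution $u$ with $u(t_0) = u_0$ satisfies
\begin{equation*}
u(t) = e^{A(t-t_0)} u_0 + \int_{t_0}^{t} e^{A(t-s)} g(u(s))\, ds.
\end{equation*}
Choosing $\eta > 0$ small enough that $\beta := \alpha - M\eta > 0$, and imposing that $u$ stays inside the ball of radius $r$ where $\|g(u)\| \leq \eta \|u\|$, I would estimate
\begin{equation*}
\|u(t)\| \leq M e^{-\alpha(t-t_0)} \|u_0\| + \int_{t_0}^{t} M e^{-\alpha(t-s)} \eta \|u(s)\|\, ds,
\end{equation*}
and apply Gronwall's lemma to the function $e^{\alpha(t-t_0)} \|u(t)\|$ to get $\|u(t)\| \leq M \|u_0\| e^{-\beta(t-t_0)}$.

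The main obstacle is to justify that the trajectory actually remains in the ball of radius $r$ where the nonlinear estimate on $g$ is valid; this is the only subtle point since the Gronwall estimate itself is mechanical. I would handle it by a standard continuity/bootstrap argument: choose $\delta = r/(2M)$, so that for $\|u_0\| < \delta$ the bound $\|u(t)\| \leq M\|u_0\| e^{-\beta(t-t_0)} < r/2$ holds as long as the trajectory remains in the ball of radius $r$; defining $T = \sup\{t \geq t_0 : \|u(s)\| \leq r \text{ for all } s \in [t_0,t]\}$, the inequality forces $T = +\infty$, for otherwise continuity would give $\|u(T)\| = r$, contradicting $\|u(T)\| \leq r/2$. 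This simultaneously yields Liapounov stability (take $\delta$ also smaller than $\epsilon/M$) and the limit $\|u(t)\| \to 0$ as $t \to +\infty$, which are precisely the two requirements for asymptotic stability of $E$.
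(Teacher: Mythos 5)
Your proof is correct and complete: the decomposition $\dot u = Au + g(u)$ with $Dg(0)=0$, the exponential bound $\|e^{At}\|\leq Me^{-\alpha t}$, the Duhamel--Gronwall estimate, and the bootstrap argument keeping the orbit in the ball of radius $r$ together give both Liapounov stability and the decay $\|u(t)\|\to 0$. The paper does not prove this theorem itself but cites Wiggins (Theorem 1.2.5), whose proof is essentially this same variation-of-constants argument, so your route matches the standard one.
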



\subsubsection{Positivity invariance}

In many applications, in particular Biology, the variables representing the dynamical evolution of the system must belong to a given domain. A classical example is given by variables associated to density of population which must stay positive during the time evolution. Such a constraint is called positivity and is defined as follows.

\begin{definition}
\label{defpos}
The system (\ref{equagen}) satisfies the positivity property if for all initial condition $x_0\in (\mathbb{R}^+)^m$ and initial time $t_0\in\mathbb{R}^+$ we have $x(t) \in (\mathbb{R}^+)^m$ for all $t\geq t_0$.
\end{definition}
 
The positivity property can be tested using the following necessary and sufficient condition (see \cite{walter} and \cite{pavel}):

\begin{theorem}
\label{condinvpos}
The set 
\begin{equation*}
K^+:=\{x=(x_1,\dots,x_m)\in\mathbb{R}^m, \ x_i\geq 0,\, i\in I\}
\end{equation*}
is invariant for (\ref{equagen}) if and only if
\begin{equation}
f_i (x) \geq  0 \qquad \textnormal{\emph{for}}\  x\in K^+\ \textnormal{\emph{such that}}\ x_i=0, \nonumber \\
\end{equation} 
for all $i\in I$. 
\end{theorem}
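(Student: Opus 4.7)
The plan is to establish the two directions separately. The necessary direction is straightforward and relies on evaluating a one-sided derivative at the initial time, while the sufficient direction is the genuine content of the theorem and requires a perturbation-plus-first-exit-time argument of Nagumo type.

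For the necessary direction, I would fix $i \in I$ and $x \in K^+$ with $x_i = 0$, and consider the solution $x(t)$ of \eqref{equagen} with $x(t_0) = x$. Invariance of $K^+$ means $x_i(t) \geq 0$ for all $t \geq t_0$. Since $x_i(t_0) = 0$, the right derivative satisfies
\begin{equation*}
f_i(x) = \dot{x}_i(t_0) = \lim_{t \to t_0^+} \frac{x_i(t) - x_i(t_0)}{t - t_0} \geq 0,
\end{equation*}
which is the claimed inequality.

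For the sufficient direction, I would argue by approximation. For each $\epsilon > 0$, define the perturbed field $f^\epsilon(x) = f(x) + \epsilon \sum_{i \in I} e_i$, where $e_i$ is the $i$-th canonical basis vector, and let $x^\epsilon(t)$ be the solution of $\dot x = f^\epsilon(x)$ with $x^\epsilon(t_0) = x_0 \in (\mathbb{R}^+)^m$. On the face $\{x \in K^+ : x_j = 0\}$ with $j \in I$, the assumption yields $f^\epsilon_j(x) \geq \epsilon > 0$. I would then set $t^\star = \inf\{t \geq t_0 : x^\epsilon(t) \notin K^+\}$ and show $t^\star = +\infty$ by contradiction: if $t^\star$ were finite, closedness of $K^+$ and continuity give $x^\epsilon(t^\star) \in K^+$, while the definition of $t^\star$ forces the set $J = \{j \in I : x^\epsilon_j(t^\star) = 0\}$ to be nonempty. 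For each $j \in J$ one has $\dot x^\epsilon_j(t^\star) = f^\epsilon_j(x^\epsilon(t^\star)) \geq \epsilon > 0$, so $x^\epsilon_j$ is strictly increasing in a right neighbourhood of $t^\star$; for $j \in I \setminus J$, continuity keeps $x^\epsilon_j$ strictly positive near $t^\star$. Hence $x^\epsilon(t) \in K^+$ on a right neighbourhood of $t^\star$, contradicting the infimum property. Finally, the continuous dependence of solutions of \eqref{equagen} on the right-hand side (applicable since $f \in C^2$) yields $x^\epsilon(t) \to x(t)$ as $\epsilon \to 0$ uniformly on compact intervals, and closedness of $K^+$ gives $x(t) \in K^+$.

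The main obstacle is the sufficient direction, and specifically the subtle point that at a hypothetical first exit time $t^\star$, several coordinates may vanish simultaneously and the unperturbed field could be merely tangent to the boundary (i.e.\ $f_i = 0$ on the face), so a naive contradiction with $\dot x_i(t^\star) \geq 0$ fails. The $\epsilon$-perturbation is precisely what upgrades tangency to strict inward-pointing behaviour and rules out any immediate exit; the remaining task is then the routine limiting procedure $\epsilon \to 0$.
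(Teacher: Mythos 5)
Your argument is correct, but there is nothing in the paper to compare it against: Theorem \ref{condinvpos} is stated as a known result with a pointer to \cite{walter} and \cite{pavel}, and no proof is given in the text or the appendices. What you have reconstructed is precisely the classical quasi-positivity (sub-tangential, Nagumo-type) argument that those references contain: necessity via a one-sided difference quotient at a boundary point, and sufficiency via the $\epsilon$-perturbation that upgrades the weak inequality $f_i\geq 0$ on the face $\{x_i=0\}$ to a strict inward-pointing condition, followed by a first-exit-time contradiction and a passage to the limit $\epsilon\to 0$ using closedness of $K^+$. Two routine points are worth tightening if you write this out in full: the first-exit-time argument should be run on the maximal interval of existence of $x^\epsilon$ (global existence for $t\geq t_0$ is implicitly assumed in Definition \ref{defpos}), and the uniform convergence $x^\epsilon\to x$ on compact time intervals requires localizing the Lipschitz constant, since $f\in C^2(\mathbb{R}^m,\mathbb{R}^m)$ need not be globally Lipschitz; a Gronwall estimate on a compact set containing the relevant trajectories handles both. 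Your closing remark correctly identifies the one genuinely delicate point, namely that tangency ($f_i=0$ on the face) defeats the naive first-exit contradiction and is exactly what the perturbation is there to rule out.
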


\subsection{Maps}

Numerical scheme define maps which can be studied as discrete dynamical systems. 

\subsubsection{Fixed points and stability}

Consider a $C^r$ ($r\geq 1$) map 
\begin{equation}
\label{map}
x\mapsto \phi (x) ,\ x\in \mathbb{R}^n .
\end{equation}
The map $\phi$ induces a {\it discrete dynamical system} defined by 
\begin{equation}
\label{discrete}
x_{n+1} =\phi (x_n ) ,\ x_n \in \mathbb{R}^n.
\end{equation}
Let $x_0 \in \mathbb{R}^n$ be given. We denote by $\phi^n =\phi \circ \dots \circ \phi$ n-times. The bi-infinite sequence $\left \{ \phi^n (x_0 ) , n\in \mathbb{Z} \right \}$ is called the {\it orbit} of $x_0$ under the map $\phi$.\\

Everything discuss for vector fields possesses a discrete analogue. In particular, equilibrium point for vector fields correspond to {\it fixed points} for maps, i.e. point $E$ such that $\phi (E)=E$. We denote by $\mathcal{F}$ the set of fixed points of (\ref{discrete}).

\begin{theorem}
Let $E$ be a fixed point of (\ref{discrete}). Assume that all the eigenvalues of the Jacobian matrix $D\phi (E)$ have moduli strictly less than one. Then the fixed point $E$ is asymptotically stable.
\end{theorem}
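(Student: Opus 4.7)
The plan is to mimic the continuous version (Theorem \ref{stablin}) but replace the argument based on the flow of the linearization by a contraction argument for the iterated map. First I would reduce to the case $E=0$ by a translation and write $\phi(x)=Ax+R(x)$, where $A=D\phi(E)$ and $R(x)=o(\|x\|)$ as $x\to 0$ thanks to the $C^r$ assumption with $r\geq 1$. The dynamics near $E$ is then governed by the perturbed linear recurrence $y_{n+1}=Ay_n+R(y_n)$.

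The key analytic input is the following standard fact: if the spectral radius $\rho(A)$ is strictly less than one, then for every $\epsilon>0$ there exists a norm $\|\cdot\|_{*}$ on $\mathbb{R}^n$ (an \emph{adapted norm}) such that $\|A\|_{*}\leq \rho(A)+\epsilon$. One obtains it by conjugating $A$ to its Jordan form and rescaling the generalized eigenbasis so that the super-diagonal entries become as small as desired; one then takes the norm induced by this basis. Pick $\epsilon$ small enough that $a:=\|A\|_{*}+2\epsilon<1$.

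Next I would exploit the differentiability of $\phi$: since $R(y)=o(\|y\|_{*})$, there is $\delta>0$ such that $\|R(y)\|_{*}\leq \epsilon \|y\|_{*}$ whenever $\|y\|_{*}\leq \delta$. For such $y_n$ one gets
\begin{equation*}
\|y_{n+1}\|_{*}\leq \|A\|_{*}\|y_n\|_{*}+\epsilon \|y_n\|_{*}\leq a\|y_n\|_{*}<\|y_n\|_{*}\leq \delta,
\end{equation*}
so the ball $\{\|y\|_{*}\leq \delta\}$ is forward-invariant and by induction $\|y_n\|_{*}\leq a^n \|y_0\|_{*}$. Since $a<1$, this yields $y_n\to 0$ geometrically, hence convergence $x_n\to E$ for every initial condition in a fixed neighbourhood of $E$. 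Liapunov stability in the adapted norm is immediate from the same estimate (take any $\eta\leq\delta$ and the corresponding ball is trapping); by equivalence of norms on $\mathbb{R}^n$ this transfers to the Euclidean norm, yielding asymptotic stability.

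The only genuinely delicate step is the existence of the adapted norm; once that is in hand the contraction estimate and the induction are routine. An alternative, avoiding Jordan reduction, is to use Gelfand's formula $\rho(A)=\lim_k\|A^k\|^{1/k}$ to pick $k_0$ with $\|A^{k_0}\|<1$ and run the contraction argument on the iterate $\phi^{k_0}$, deducing asymptotic stability of $E$ for $\phi$ from that of $\phi^{k_0}$; this is the path I would actually prefer because it bypasses the explicit construction of the adapted norm.
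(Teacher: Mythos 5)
Your argument is correct, but note that the paper itself does not prove this statement: it is recalled as a classical fact in the background section, with a pointer to Wiggins's book for the proof. What you have written is essentially the standard textbook proof that such a reference would contain — translate the fixed point to the origin, split $\phi(x)=Ax+R(x)$ with $R(x)=o(\|x\|)$, construct an adapted norm with $\|A\|_{*}\leq\rho(A)+\epsilon$, and run a contraction estimate on a small forward-invariant ball. The estimates are all in order, and the alternative via Gelfand's formula applied to $\phi^{k_0}$ is also a legitimate (and arguably cleaner) route, provided you add the short argument that asymptotic stability of $E$ under $\phi^{k_0}$ together with continuity of $\phi,\dots,\phi^{k_0-1}$ at $E$ gives asymptotic stability under $\phi$. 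One presentational quibble: the sentence ``pick $\epsilon$ small enough that $a:=\|A\|_{*}+2\epsilon<1$'' is circular as written, since the adapted norm (and hence $\|A\|_{*}$) depends on $\epsilon$; you should fix $\epsilon<(1-\rho(A))/3$ first and then invoke the bound $\|A\|_{*}\leq\rho(A)+\epsilon$ to conclude $a\leq\rho(A)+3\epsilon<1$. This is trivially repaired and does not affect the validity of the proof.
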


\subsubsection{Positivity invariance}

The positivity invariance for vector fields has also an analogue in the discrete setting :

\begin{definition}
The discrete dynamical system (\ref{discrete}) satisfies the positivity property if for all initial conditions $x_0\in (\mathbb{R}^+)^n$, we have $x_n \in (\mathbb{R}^+)^n$ for all $n\geq 0$.
\end{definition}

A necessary and sufficient condition for positivity is that $\phi (x_0) \geq 0$ for all $x_0 \geq 0$. Although simple, this condition is in general difficult to check for a given map.

\section{Reminder about non standard numerical scheme}

We suppose the whole integration occurs over an interval $[t_0,T]$ with $T\in\mathbb{R}^+$. Let $h \in \mathbb{R}$ with $h>0$. For $k\in \mathbb{N}$, we denote by $t_k$ the discrete time defined by $t_k = kh$. \\

\begin{definition}
A general one-step numerical scheme with a step size $h$, that approximates the solution of a general system such as (\ref{equagen}) can be written in the form
\begin{equation}
\label{equagenh}
\quad X_{k+1} = \phi_h (X_k)
\end{equation}
where $\phi_h$ is $C^2(\mathbb{R}^m,\mathbb{R}^m)$ and $X_k$ is the approximate solution of (\ref{equagen}) at time $t_k$, for all $k\ge 0$ and $X_0$ the initial value.
\end{definition}

\begin{definition}
A numerical method {\em converges} if the numerical solution $X_k$ satisfies
\begin{equation}
\sup_{0 \le t_k \le T}  \|X_k - x (t_k) \|_\infty \rightarrow 0 
\end{equation}
as $h \rightarrow 0$ and $X_0 \rightarrow x(t_0)$. 

It is {\em accurate of order $p$} if
\begin{equation}
\sup_{0 \le t_k \le T}  \|X_k - x(t_k) \|_\infty = O(h^p) + O(\|X_0 - x(t_0)\|_\infty )
\end{equation}
as $h \rightarrow 0$ and $X_0 \rightarrow x(t_0)$.
\end{definition}

Following R. Anguelov and J.M-S. Lubuma (see \cite{al2000,al2001}), we define the notion of {\it non-standard finite difference scheme} as follows :

\begin{definition}
A general one-step numerical scheme that approximate the solution of (\ref{equagen}) is called \textbf{N}on-\textbf{S}tandard \textbf{F}inite \textbf{D}ifference scheme if at least one of the following conditions is satisfied :
\begin{itemize}
\item $\frac{d}{dt}X(t_k)$ is approximate as $\frac{X_{k+1}-X_k}{\varphi(h)}$ where $\varphi(h)=h+O(h^2)$ a nonnegative function,
\item $\phi_h (f,X_k)=\tilde{\phi}_h (f,X_k,X_{k+1})$ is a nonlocal approximation of $f(t_k,X(t_k))$.
\end{itemize}
\end{definition}

The terminology of {\it nonlocal approximation} comes from the fact that the approximation of a given function $f$ is not only given at point $x_k$ by $f(x_k)$ but can eventually depends on more points of the orbits as for example 
\begin{align*}
x^2(t_k) &\approx x_k x_{k+1}, \ x_k x_{k-1}, \ x_k \left(\frac{x_{k-1}+x_{k+1}}{2}\right), \\
x^3(t_k) &\approx x_k^2 x_{k+1}, \ x_{k-1} x_k x_{k+1}. \\
\end{align*}
In the previous definition, we have concentrated on the easiest case, depending only on $x_k$ and $x_{k+1}$.

\section{A class of ordinary differential equations}

We consider the two dimensional system of ordinary differential equations defined for $(x,y)\in 
\mathbb{R}\times \mathbb{R}$ by
$$
\left .
\begin{array}{lll}
	\di\frac{dx}{dt} & =  x \left(f_+(x,y)-f_-(x,y) \right), \quad x(t_0)=x_0 \ge 0 ,\\
	\di\frac{dy}{dt} & =  y \left(g_+(x,y)-g_-(x,y) \right), \quad y(t_0)=y_0 \ge 0 ,
\end{array}
\right .
\eqno{(E)}
$$
where $f_+,f_-$ and $g_+,g_-$ are positive for all $(x,y) \in \mathbb{R}^+ \times \mathbb{R}^+$ and of class $C^1$. \\

The vector field associated to (E), denoted by $\phi :\mathbb{R}^2 \rightarrow \mathbb{R}^2$, is defined by 
\begin{equation}
\varphi(x,y)=\begin{pmatrix}
x \left(f_+(x,y)-f_-(x,y) \right) \\
y \left(g_+(x,y)-g_-(x,y) \right)
\end{pmatrix}
\end{equation}
Equation (E) contains classical examples like the {\it general Rosenzweig-MacArthur predator-prey model} (see \cite{brauer}, p. 182) studied in particular by D. T. Dimitrov and H. V. Kojouharov \cite{dimitrov2006}.

\subsection{Equilibrium points and stability}

The set of equilibrium points of $(E)$ is denoted by $\mathcal{F}$. By definition, a point $(x,y) \in \mathcal{F}$ satisfies
$$
\begin{array}{lll}
x \left(f_+(x,y)-f_-(x,y) \right)& = & 0, \\
y \left(g_+(x,y)-g_-(x,y) \right)& = & 0.
\end{array}
$$
Equilibrium points of $(E)$ consist in the origin $O=(0,0)$ and {\it potential} equilibrium points which can belong to three distinct family given by 

\begin{align*}
E_1 &= (x_\sharp,0) \ \text{where} \ x_\sharp\neq 0 \ \text{and} \ f_+(x_\sharp,0)=f_-(x_\sharp,0) \ , \\
E_2 &= (0,y_\sharp) \ \text{where} \ y_\sharp\neq 0 \ \text{and} \ g_+(0,y_\sharp)=g_-(0,y_\sharp) \ \\
E_3 &= (x_\star,y_\star) \ \mbox{with}\ x_\star \not=0 ,\, y_\star \not= 0 \ \text{where} \ f_+(x_\star,y_\star)=f_-(x_\star,y_\star) \ \text{and} \ g_+(x_\star,y_\star)=g_-(x_\star,y_\star) \ ,
\end{align*}
depending on the existence of solutions for each equation. The family $E_1$ and $E_2$ can naturally be included in the family $E_3$ if we allow null components. However, in many examples, only family $E_1$ and $E_2$ appear. Moreover, the preservation of a point of the family $E_3$ behaves in general very differently as the preservation of an equilibrium point of the families $E_1$ and $E_2$ (see Section \ref{stability-nsfdm}).\\

The stability/instability nature of these equilibrium points can be completely solved. Indeed, we have the following Lemmas describing explicitly the eigenvalues for each type of equilibrium point.

\begin{lemma}
\label{stabilityO}
The origin has eigenvalues given by $\lambda^0_1 = \left(f_+-f_-\right)(0,0)$ and $\lambda^0_2=\left(g_+-g_-\right)(0,0)$.
\end{lemma}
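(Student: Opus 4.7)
The plan is a direct computation of the Jacobian $D\varphi$ at the origin, followed by reading off its eigenvalues.

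First, I would compute the Jacobian of $\varphi$ at a general point $(x,y)$. Applying the product rule to each component, I get
\begin{equation*}
D\varphi(x,y) = \begin{pmatrix} (f_+-f_-)(x,y) + x\,\partial_x(f_+-f_-)(x,y) & x\,\partial_y(f_+-f_-)(x,y) \\[1mm] y\,\partial_x(g_+-g_-)(x,y) & (g_+-g_-)(x,y) + y\,\partial_y(g_+-g_-)(x,y) \end{pmatrix}.
\end{equation*}

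Next, I would evaluate this matrix at the origin. Every off-diagonal entry carries an explicit factor of $x$ or $y$, as does the correction term to each diagonal entry; hence they all vanish at $(x,y) = (0,0)$. What remains is the diagonal matrix
\begin{equation*}
D\varphi(0,0) = \begin{pmatrix} (f_+-f_-)(0,0) & 0 \\ 0 & (g_+-g_-)(0,0) \end{pmatrix}.
\end{equation*}

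Finally, since the matrix is diagonal, its eigenvalues are its diagonal entries, giving $\lambda^0_1 = (f_+-f_-)(0,0)$ and $\lambda^0_2 = (g_+-g_-)(0,0)$, as claimed. There is no real obstacle here: the lemma is essentially a mechanical consequence of the product rule together with the structural fact that each component of the vector field factors through $x$ or $y$, which is why all cross-terms drop out at the origin.
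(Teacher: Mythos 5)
Your proof is correct and follows exactly the paper's own argument: compute the general Jacobian via the product rule, observe that all terms carrying a factor of $x$ or $y$ vanish at the origin, and read the eigenvalues off the resulting diagonal matrix. Nothing to add.
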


The proof is given in Section \ref{proof-stabilityO}.

\begin{lemma}
\label{stability-E1-E2}
Assume that $(E)$ possesses an equilibrium point belonging to the family $E_1$ (resp. $E_2$) denoted by $(x_\sharp,0)$ (resp. $(0,y_\sharp)$). The eigenvalues are given by $\lambda^1_1 = x_\sharp\left(\partial_x f_+ - \partial_x f_- \right)(x_\sharp,0)$ and $\lambda^1_2=\left(g_+-g_-\right)(x_\sharp,0)$ (resp. $\lambda^2_1=f_+(0,y_\sharp)-f_-(0,y_\sharp)$ and $\lambda^2_2=y_\sharp\left(\partial_y g_+ - \partial_y g_- \right)(0,y_\sharp)$).
\end{lemma}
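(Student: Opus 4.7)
The plan is to compute the Jacobian $D\varphi$ of the vector field defining $(E)$ at an equilibrium of type $E_1$ (the case $E_2$ is completely symmetric), observe that the defining equation of $E_1$ makes one entry of the Jacobian vanish so that it becomes triangular, and then read off the eigenvalues from the diagonal.

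Concretely, write $F(x,y) = f_+(x,y) - f_-(x,y)$ and $G(x,y) = g_+(x,y) - g_-(x,y)$, so that $\varphi(x,y) = (xF(x,y),\, yG(x,y))$. Applying the product rule, I would compute
\begin{equation*}
D\varphi(x,y) = \begin{pmatrix} F(x,y) + x\,\partial_x F(x,y) & x\,\partial_y F(x,y) \\ y\,\partial_x G(x,y) & G(x,y) + y\,\partial_y G(x,y) \end{pmatrix}.
\end{equation*}
At $E_1 = (x_\sharp,0)$, the defining condition $f_+(x_\sharp,0) = f_-(x_\sharp,0)$ gives $F(x_\sharp,0) = 0$, and the second-coordinate vanishing $y = 0$ kills both entries multiplied by $y$. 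Therefore
\begin{equation*}
D\varphi(x_\sharp,0) = \begin{pmatrix} x_\sharp\,\partial_x F(x_\sharp,0) & x_\sharp\,\partial_y F(x_\sharp,0) \\ 0 & G(x_\sharp,0) \end{pmatrix},
\end{equation*}
which is upper triangular. Its eigenvalues are exactly the diagonal entries, giving $\lambda^1_1 = x_\sharp(\partial_x f_+ - \partial_x f_-)(x_\sharp,0)$ and $\lambda^1_2 = (g_+ - g_-)(x_\sharp,0)$, as claimed.

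For $E_2 = (0,y_\sharp)$, the same computation applies with the roles of the two coordinates exchanged: now $G(0,y_\sharp) = 0$ and $x = 0$, so $D\varphi(0,y_\sharp)$ becomes lower triangular with diagonal entries $F(0,y_\sharp) = f_+(0,y_\sharp) - f_-(0,y_\sharp)$ and $y_\sharp(\partial_y g_+ - \partial_y g_-)(0,y_\sharp)$, matching $\lambda^2_1$ and $\lambda^2_2$.

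There is no real obstacle here: the only thing to be careful about is tracking which terms drop out. The conceptual point worth stressing is that the partial decoupling at a boundary equilibrium (one coordinate equal to zero, the other satisfying only its own vanishing condition) is exactly what forces the Jacobian to be triangular and thereby makes the spectrum explicit; this is in contrast to an interior equilibrium $E_3$, where no such triangularisation occurs and one must instead analyse a genuine $2\times 2$ characteristic polynomial.
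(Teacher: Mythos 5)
Your proof is correct and follows essentially the same route as the paper: compute the Jacobian of $\varphi$ by the product rule, evaluate it at the boundary equilibrium where the defining condition $f_+(x_\sharp,0)=f_-(x_\sharp,0)$ and the vanishing coordinate $y=0$ make the matrix triangular, and read the eigenvalues off the diagonal. The $E_2$ case by symmetry is also exactly what the paper does.
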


The proof is given in Section \ref{proof-E1-E2}.\\

Finally, we have the following general result :

\begin{lemma}
\label{stability-E3}
Assume that $(E)$ possesses an equilibrium point belonging to the family $E_3$ denoted by $(x_* ,y_*)$. We denote by $T$ (resp. $D$) the trace (resp. determinant) of the Jacobian matrix of $\phi$ at point $(x_* ,y_*)$ denoted by $D\varphi(x_\star,y_\star)$ is given by 
\begin{equation}
T=x_\star\left(\partial_x f_+ - \partial_x f_- \right)(x_\star,y_\star)+y_\star\left(\partial_y g_+ - \partial_y g_- \right)(x_\star,y_\star) ,
\end{equation}
and
\begin{equation}
D=x_\star y_\star \left(\left(\partial_x f_+ - \partial_x f_- \right)\left(\partial_y g_+ - \partial_y g_- \right)-\left(\partial_y f_+ - \partial_y f_- \right)\left(\partial_x g_+ - \partial_x g_- \right)\right)(x_\star,y_\star) .
\end{equation}
If $T^2-4 D\geq 0$ then $D\varphi(x_\star,y_\star)$ has eigenvalues given by $\di\frac{1}{2} (T\pm \sqrt{T^2 -4D})$. 

Else if $T^2-4 D<0$ then $D\varphi(x_\star,y_\star)$ has eigenvalues $\di\frac{1}{2} (T\pm i\sqrt{4D -T^2})$ with $i^2 =1$. 
\end{lemma}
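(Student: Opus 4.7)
The plan is a direct Jacobian computation followed by the standard $2\times 2$ characteristic polynomial. First I would write $\varphi = (\varphi_1,\varphi_2)$ with $\varphi_1(x,y) = x(f_+ - f_-)(x,y)$ and $\varphi_2(x,y) = y(g_+ - g_-)(x,y)$, and apply the product rule to get
\begin{equation*}
D\varphi(x,y) = \begin{pmatrix} (f_+ - f_-) + x(\partial_x f_+ - \partial_x f_-) & x(\partial_y f_+ - \partial_y f_-) \\ y(\partial_x g_+ - \partial_x g_-) & (g_+ - g_-) + y(\partial_y g_+ - \partial_y g_-) \end{pmatrix},
\end{equation*}
all quantities evaluated at $(x,y)$.

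Next I would specialize to $(x_\star, y_\star)$. By the defining property of family $E_3$, we have $(f_+ - f_-)(x_\star,y_\star) = 0$ and $(g_+ - g_-)(x_\star,y_\star) = 0$, so the two diagonal entries collapse to $x_\star(\partial_x f_+ - \partial_x f_-)(x_\star,y_\star)$ and $y_\star(\partial_y g_+ - \partial_y g_-)(x_\star,y_\star)$ respectively, while the off-diagonal entries remain unchanged. Summing the diagonal yields exactly the claimed expression for $T$, and expanding the $2\times 2$ determinant gives
\begin{equation*}
D = x_\star y_\star \Bigl[ (\partial_x f_+ - \partial_x f_-)(\partial_y g_+ - \partial_y g_-) - (\partial_y f_+ - \partial_y f_-)(\partial_x g_+ - \partial_x g_-) \Bigr](x_\star,y_\star),
\end{equation*}
as stated.

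Finally, since $D\varphi(x_\star,y_\star)$ is a real $2\times 2$ matrix, its characteristic polynomial is $\lambda^2 - T\lambda + D = 0$. Solving via the quadratic formula gives $\lambda = \tfrac{1}{2}\bigl(T \pm \sqrt{T^2 - 4D}\bigr)$ when $T^2 - 4D \geq 0$, and $\lambda = \tfrac{1}{2}\bigl(T \pm i\sqrt{4D - T^2}\bigr)$ when $T^2 - 4D < 0$, which is the claim. There is no real obstacle here: the only care needed is to invoke the $E_3$ conditions $(f_+ - f_-)(x_\star,y_\star) = (g_+ - g_-)(x_\star,y_\star) = 0$ so that the ``extra'' diagonal terms vanish; the rest is bookkeeping and the scalar quadratic formula.
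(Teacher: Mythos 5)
Your proof is correct and follows essentially the same route as the paper: compute the Jacobian via the product rule, use the $E_3$ conditions $(f_+-f_-)(x_\star,y_\star)=(g_+-g_-)(x_\star,y_\star)=0$ to kill the extra diagonal terms, and read off the eigenvalues from the quadratic formula applied to $\lambda^2-T\lambda+D=0$. If anything, you are slightly more explicit than the paper in justifying why the diagonal entries collapse.
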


\begin{proof}
The Jacobian of $\Phi$ at an equilibrium point of the family $E_3$ is given by
\begin{equation}
D\varphi(x_\star,y_\star) = \begin{pmatrix}
x_\star\left(\partial_x f_+ - \partial_x f_- \right)(x_\star,y_\star) & x_\star\left(\partial_y f_+ - \partial_y f_- \right)(x_\star,y_\star) \\ 
y_\star\left(\partial_x g_+ - \partial_x g_- \right)(x_\star,y_\star) & y_\star\left(\partial_y g_+ - \partial_y g_- \right)(x_\star,y_\star)
\end{pmatrix}
.
\end{equation}
The characteristic polynomial is then given by $\lambda^2 -T \lambda +D =0$ where $T$ and $D$ correspond to the trace and determinant of $D\varphi(x_\star,y_\star)$. This concludes the proof.
\end{proof}

We denote by $(S_{E_i})$ for $i=0,1,2,3$ the conditions where $Re(\lambda^i_1)$ and $Re(\lambda^i_2)$ are strictly negatives that is to say $(S_{E_i})$ is the conditions for which the equilibrium point in ${E_i}$ is linearly asymptotically stable (and then asymptotically stable by Theorem \ref{stablin}). Using the previous Lemmas we have the following explicit characterization of linearly asymptotically stable equilibrium points in each family :

\begin{lemma}[Conditions of linear asymptotic stability] 
The conditions of asymptotic stability $(S_{E_i})$ for $i=0,1,2,3$ are given by :
\begin{itemize}
\item The origin is linearly asymptotically stable if and only if $\left(f_+-f_-\right)(0,0) <0$ and $\left(g_+-g_-\right)(0,0) <0$.

\item An equilibrium point belonging to the family $E_1$ (resp. $E_2$) denoted by $(x_\sharp,0)$ (resp. $(0,y_\sharp)$) is linearly asymptotically stable if $x_\sharp\left(\partial_x f_+ - \partial_x f_- \right)(x_\sharp,0) <0$ and $\left(g_+-g_-\right)(x_\sharp,0) <0$ (resp. $f_+(0,y_\sharp)-f_-(0,y_\sharp) <0$ and $y_\sharp\left(\partial_y g_+ - \partial_y g_- \right)(0,y_\sharp) <0$).

\item An equilibrium point of the family $E_3$ is linearly asymptotically stable if and only if $T<0$ and $D>0$.
\end{itemize}
\end{lemma}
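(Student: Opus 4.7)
The plan is simply to combine Theorem \ref{stablin} (negative real parts of all eigenvalues of the Jacobian imply linear asymptotic stability) with the explicit eigenvalue formulas obtained in Lemmas \ref{stabilityO}, \ref{stability-E1-E2}, and \ref{stability-E3}, treating each of the four families in turn.

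For the origin and for the families $E_1$, $E_2$, the eigenvalues given in Lemmas \ref{stabilityO} and \ref{stability-E1-E2} are manifestly real numbers (they are products or differences of real quantities). Hence the condition ``all eigenvalues have strictly negative real parts'' reduces to ``each of the two eigenvalues is strictly negative''. One just rewrites the two conditions explicitly: at the origin, $(f_+-f_-)(0,0)<0$ and $(g_+-g_-)(0,0)<0$; at $(x_\sharp,0)\in E_1$, $x_\sharp(\partial_xf_+-\partial_xf_-)(x_\sharp,0)<0$ and $(g_+-g_-)(x_\sharp,0)<0$; and the analogous two inequalities at $(0,y_\sharp)\in E_2$.

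For the family $E_3$, the characteristic polynomial appearing in Lemma \ref{stability-E3} is $\lambda^2-T\lambda+D=0$. I would invoke the Routh--Hurwitz criterion for a real monic quadratic $\lambda^2+a\lambda+b$: both roots have strictly negative real parts if and only if $a>0$ and $b>0$. Here $a=-T$ and $b=D$, so the criterion becomes $T<0$ and $D>0$. For completeness one can reprove this small fact by splitting into the two cases already identified in Lemma \ref{stability-E3}: if $T^2-4D\geq 0$ the two real eigenvalues $\frac{1}{2}(T\pm\sqrt{T^2-4D})$ are both negative exactly when their sum $T$ is negative and their product $D$ is positive; if $T^2-4D<0$ the complex conjugate pair $\frac{1}{2}(T\pm i\sqrt{4D-T^2})$ automatically has $D>0$ (since $D>T^2/4\geq 0$) and its real part $T/2$ is negative iff $T<0$.

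There is essentially no obstacle: the three lemmas already did the work of computing the eigenvalues, and the only nontrivial step is the elementary Routh--Hurwitz discussion for the $2\times 2$ case, which is a short case analysis on the sign of the discriminant. The proof is therefore a direct assembly of these pieces.
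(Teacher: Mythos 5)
Your proposal is correct and follows exactly the route the paper intends: the paper gives no separate proof of this lemma, simply noting that it follows from the eigenvalue computations of Lemmas \ref{stabilityO}, \ref{stability-E1-E2}, and \ref{stability-E3} together with the classical trace--determinant characterization for the $E_3$ case (for which it cites \cite{hubbard}). Your explicit Routh--Hurwitz/discriminant case analysis just spells out that last classical step, so the two arguments coincide.
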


These conditions will be used in Section \ref{stability-nsfdm}. Only the third condition is not trivial although classical. It uses the trace-determinant diagram to characterize the dynamical behaviour of linear systems (see \cite{hubbard}).  

\subsection{Positivity invariance}

Using Theorem \ref{condinvpos}, we easily derive the following result :

\begin{theorem}
The system $(E)$ satisfies the positivity property.
\end{theorem}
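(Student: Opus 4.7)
The plan is to apply Theorem \ref{condinvpos} directly with $m = 2$ and $I = \{1,2\}$, so that $K^+ = (\mathbb{R}^+)^2$ coincides exactly with the domain specified in Definition \ref{defpos}. The vector field of $(E)$ is
\[
\varphi(x,y) = \bigl(\varphi_1(x,y),\varphi_2(x,y)\bigr) = \bigl(x(f_+(x,y)-f_-(x,y)),\, y(g_+(x,y)-g_-(x,y))\bigr),
\]
and since $f_\pm, g_\pm$ are of class $C^1$ on $\mathbb{R}^+\times\mathbb{R}^+$, so is $\varphi$, which is enough regularity for the boundary criterion to apply.

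Next, I would verify the two boundary inequalities. For $i=1$, I must check $\varphi_1(x,y) \geq 0$ whenever $(x,y)\in K^+$ with $x_1 = x = 0$. But $\varphi_1(0,y) = 0\cdot(f_+(0,y)-f_-(0,y)) = 0$, so the inequality holds with equality. Symmetrically, for $i=2$, $\varphi_2(x,0) = 0\cdot(g_+(x,0)-g_-(x,0)) = 0 \geq 0$. Both conditions of Theorem \ref{condinvpos} are therefore satisfied.

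By Theorem \ref{condinvpos}, $K^+ = (\mathbb{R}^+)^2$ is invariant under the flow of $(E)$. In particular, any solution with initial data $x_0 \geq 0$, $y_0 \geq 0$ remains in $(\mathbb{R}^+)^2$ for all $t \geq t_0$, which is precisely the positivity property of Definition \ref{defpos}.

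There is essentially no obstacle: the factorized form of the right-hand side in $(E)$ makes each component vanish on the corresponding coordinate axis, so the invariance criterion is satisfied trivially. The only subtle point worth flagging is that the positivity of the functions $f_\pm, g_\pm$ plays no role in this argument — it matters elsewhere in the paper, but for invariance of $K^+$ it is the multiplicative factor $x$ (respectively $y$) in front of $f_+ - f_-$ (respectively $g_+ - g_-$) that does all the work.
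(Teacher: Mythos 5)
Your proof is correct and follows exactly the same route as the paper, which simply invokes Theorem \ref{condinvpos} and notes that its hypotheses are ``clearly satisfied'' for $(E)$; you have merely spelled out the verification that each component of the vector field vanishes on the corresponding coordinate hyperplane. Your closing observation that the positivity of $f_\pm, g_\pm$ is not needed for this particular argument is accurate and a nice touch, but it does not change the substance.
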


\begin{proof}
The conditions of Theorem \ref{condinvpos} are clearly satisfied for $(E)$.
\end{proof}

\section{A non-standard finite difference scheme}

The notion of non-standard scheme was introduced by R. E. Mickens at the end of the 80's. We refer to the book \cite{mickens1994} in particular Chapter 3 for more details and an overview of Mickens's ideas and to \cite{mickens2005adv} for a more recent presentation.

\subsection{Definition}

We introduce the following non-standard finite difference scheme :

\begin{definition}
The NSFD scheme of $(E)$ is given by
\begin{equation}
\label{nsfdm-scheme}
\begin{array}{lll} 
\di\frac{x_{k+1}-x_k}{h} &= & x_k f_+(x_k,y_k)-x_{k+1}f_-(x_k,y_k), \\
\di\frac{y_{k+1}-y_k}{h} &= & y_k g_+(x_k,y_k)-y_{k+1}g_-(x_k,y_k).
\end{array}
\end{equation}
\end{definition}

The associated discrete dynamical system is defined by the map $\varphi_{NS,h} :\mathbb{R}^2 \rightarrow \mathbb{R}^2$ given by
\begin{equation}
\varphi_{NS,h}(x_k,y_k)=\begin{pmatrix}
x_k \left( \di\frac{1 + h f_+(x_k,y_k)}{1 + h f_-(x_k,y_k)} \right) \\ 
y_k \left( \di\frac{1 + h g_+(x_k,y_k)}{1 + h g_-(x_k,y_k)} \right)
\end{pmatrix} 
\end{equation}

As usual, the main issue for numerical scheme is to prove {\it convergence}. We have the following result :

\begin{theorem}
The NSFD scheme (\ref{nsfdm-scheme}) is convergent and of order one.
\label{thmconv}
\end{theorem}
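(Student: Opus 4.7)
The plan is to prove convergence by the classical framework for one-step explicit methods: rewrite the scheme in incremental form $X_{k+1}=X_k+h\,\Phi_h(X_k)$, then establish consistency of order one and a uniform Lipschitz bound, and finally apply a Gronwall-type estimate. Solving (\ref{nsfdm-scheme}) explicitly for $x_{k+1},y_{k+1}$ yields
\begin{equation*}
\Phi_h(x,y)=\begin{pmatrix}
\dfrac{x\,(f_+-f_-)(x,y)}{1+h f_-(x,y)}\\[2mm]
\dfrac{y\,(g_+-g_-)(x,y)}{1+h g_-(x,y)}
\end{pmatrix},
\end{equation*}
and the key point is that $\Phi_0=\varphi$, the vector field of $(E)$.

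First I would fix a compact set $K\subset(\mathbb{R}^+)^2$ that contains a tubular neighbourhood of the exact solution $(x(t),y(t))$ for $t\in[t_0,T]$ (which exists and is bounded by continuity and the finite time horizon). Because $f_-,g_-\ge0$ on $(\mathbb{R}^+)^2$, we have $1+hf_-\ge1$ and $1+hg_-\ge1$ on $K$ for every $h\ge0$, so the denominators are uniformly bounded away from zero. Combined with the $C^1$ regularity of $f_\pm,g_\pm$, this gives a Lipschitz constant $L$ for $\Phi_h$ on $K$ that is independent of $h\in[0,h_0]$ for any fixed $h_0>0$.

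Second I would estimate the local truncation error. On $K$, a first-order expansion of $1/(1+hf_-)=1-hf_-+O(h^2)$ gives $\Phi_h(x,y)-\varphi(x,y)=O(h)$ uniformly, while Taylor expanding the exact solution gives $(x(t_{k+1})-x(t_k))/h=\varphi_1(x(t_k),y(t_k))+O(h)$ (and similarly for $y$), since $\varphi\in C^1$ implies the solution is $C^2$. Subtracting, the local truncation error
\begin{equation*}
\tau_k=\frac{X(t_{k+1})-X(t_k)}{h}-\Phi_h(X(t_k))
\end{equation*}
satisfies $\|\tau_k\|_\infty\le C_1 h$, which is precisely consistency of order one.

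Finally, the global error $e_k=X_k-X(t_k)$ satisfies $e_{k+1}=e_k+h\bigl(\Phi_h(X_k)-\Phi_h(X(t_k))\bigr)-h\tau_k$, so as long as the iterates $X_k$ remain in $K$, the Lipschitz bound yields $\|e_{k+1}\|_\infty\le(1+hL)\|e_k\|_\infty+C_1 h^2$, and the discrete Gronwall lemma gives
\begin{equation*}
\sup_{0\le t_k\le T}\|X_k-X(t_k)\|_\infty\le e^{LT}\|X_0-X(t_0)\|_\infty+\frac{C_1}{L}\bigl(e^{LT}-1\bigr)h=O(h)+O(\|X_0-X(t_0)\|_\infty),
\end{equation*}
which is both convergence and accuracy of order one. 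The main (but mild) obstacle is closing the bootstrap loop: one must verify that the $X_k$ indeed stay in $K$. This is done by choosing $K$ as a strict neighbourhood of the exact orbit, so that for $h$ (and the initial error) small enough, the global error bound just derived keeps $X_k$ inside $K$ for all $k$ with $t_k\le T$, justifying the use of the uniform Lipschitz constant a posteriori.
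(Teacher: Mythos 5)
Your proof is correct and follows essentially the same route as the paper: consistency of order one plus a Lipschitz/stability bound, combined via the discrete Gronwall lemma. You are in fact slightly more careful than the paper on two points it glosses over --- confining the iterates to a compact set so that a uniform Lipschitz constant is available, and explicitly accounting for the $O(h)$ discrepancy between the scheme's increment function $\Phi_h$ and the vector field $\varphi$ coming from the $1/(1+hf_-)$ factors.
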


The proof is given in Appendix \ref{proof-convergence}.

\section{Dynamical properties of the NSFD scheme}

\subsection{Positivity invariance}

As $f_+,f_-$ and $g_+,g_-$ are positive for all $(x,y) \in \mathbb{R}^+ \times \mathbb{R}^+$, we have :

\begin{lemma}
The NSFD scheme preserves positivity for arbitrary $h$. \label{lempos}
\end{lemma}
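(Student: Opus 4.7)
The plan is to argue by straightforward induction on $k$ using the explicit form of the map $\varphi_{NS,h}$ displayed just before the lemma. Since the lemma assumes only that $(x_0,y_0)\in(\mathbb{R}^+)^2$, and since the scheme is iteratively defined, it suffices to prove the one-step implication: if $(x_k,y_k)\in(\mathbb{R}^+)^2$ then $(x_{k+1},y_{k+1})\in(\mathbb{R}^+)^2$.

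For the one-step implication I would first verify that the map is actually well defined at the point $(x_k,y_k)$, that is, that the denominators $1+hf_-(x_k,y_k)$ and $1+hg_-(x_k,y_k)$ do not vanish. This follows immediately from the standing hypothesis that $f_-$ and $g_-$ are positive on $\mathbb{R}^+\times\mathbb{R}^+$ together with $h>0$: both denominators are in fact strictly greater than $1$. Next I would observe that by the same positivity hypothesis the numerators $1+hf_+(x_k,y_k)$ and $1+hg_+(x_k,y_k)$ are also strictly greater than $1$, so both ratios appearing in $\varphi_{NS,h}(x_k,y_k)$ are strictly positive. Multiplying these positive factors by $x_k\geq 0$ and $y_k\geq 0$ respectively gives $x_{k+1}\geq 0$ and $y_{k+1}\geq 0$, which closes the induction.

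A remark I would include is that no condition on $h$ is required, because the construction of the NSFD scheme places $f_-$ (respectively $g_-$) in the denominator precisely so that the factor $\tfrac{1+hf_+}{1+hf_-}$ remains positive for every $h>0$; this is exactly the Mickens-type nonlocal approximation of the $-f_-$ term responsible for the unconditional preservation. By contrast, a classical explicit Euler discretisation $x_{k+1}=x_k+h x_k(f_+-f_-)$ would only preserve positivity under a step-size restriction. There is no real obstacle here; the entire argument is a direct consequence of the algebraic structure of $\varphi_{NS,h}$ and of the sign hypotheses on $f_\pm,g_\pm$ that are built into the class $(E)$.
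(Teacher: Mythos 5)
Your argument is correct and is essentially the paper's own reasoning: the authors state the lemma as an immediate consequence of the positivity of $f_\pm,g_\pm$ on $\mathbb{R}^+\times\mathbb{R}^+$, which makes both factors $\tfrac{1+hf_+}{1+hf_-}$ and $\tfrac{1+hg_+}{1+hg_-}$ positive for every $h>0$, exactly as in your induction step. Your added remarks on well-definedness and the comparison with explicit Euler are sound elaborations, not a different method.
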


\subsection{Equilibrium points}

In general we have $\mathcal{F} \subset \mathcal{F}_h$ because numerical schemes induce sometimes artificial fixed points like the Runge-Kutta methods. These points are often called extraneous or ghost fixed points (see \cite{cartwright} p. 16). The NSFD scheme behaves very nicely :

\begin{lemma}
For arbitrary $h$, we have $\mathcal{F} =\mathcal{F}_h$.
\end{lemma}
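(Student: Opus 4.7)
The plan is to show equality by a direct double-inclusion argument, exploiting the special rational form of $\varphi_{NS,h}$. The fixed-point condition $\varphi_{NS,h}(x,y)=(x,y)$ reads, coordinate by coordinate,
\begin{equation*}
x\,\frac{1+hf_+(x,y)}{1+hf_-(x,y)}=x,\qquad y\,\frac{1+hg_+(x,y)}{1+hg_-(x,y)}=y.
\end{equation*}
I will restrict attention to $(x,y)\in(\mathbb{R}^+)^2$, where, since $f_\pm,g_\pm\ge 0$ and $h>0$, the denominators $1+hf_-$ and $1+hg_-$ are strictly positive and the map is well defined; by Lemma \ref{lempos} this is the region of interest.

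First I would prove $\mathcal{F}\subseteq\mathcal{F}_h$. Let $(x^*,y^*)\in\mathcal{F}$, so that $x^*(f_+-f_-)(x^*,y^*)=0$ and $y^*(g_+-g_-)(x^*,y^*)=0$. In the first component, either $x^*=0$, in which case the first component of $\varphi_{NS,h}(x^*,y^*)$ is trivially $0=x^*$, or $f_+(x^*,y^*)=f_-(x^*,y^*)$, in which case the fraction $(1+hf_+)/(1+hf_-)$ equals $1$ and the first component is $x^*$. The same dichotomy on $y^*$ handles the second component, so $(x^*,y^*)$ is a fixed point of $\varphi_{NS,h}$.

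For the reverse inclusion $\mathcal{F}_h\subseteq\mathcal{F}$, suppose $(x,y)$ is a fixed point of $\varphi_{NS,h}$. Multiplying the first fixed-point equation by the positive quantity $1+hf_-(x,y)$ yields $x(1+hf_+(x,y))=x(1+hf_-(x,y))$, which simplifies to $hx\,(f_+-f_-)(x,y)=0$, and since $h>0$ this gives $x(f_+-f_-)(x,y)=0$, i.e.\ the first equilibrium equation of $(E)$. The identical manipulation on the second coordinate gives $y(g_+-g_-)(x,y)=0$, so $(x,y)\in\mathcal{F}$.

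There is no real obstacle here; the proof is essentially an algebraic triviality, but it is precisely the point of Mickens-type denominator construction $f_+(x_k,y_k)-x_{k+1}f_-(x_k,y_k)$: the nonlocal term $x_{k+1}f_-$ factors out $x$ rather than introducing independent polynomial roots, which is exactly why no ghost fixed points are produced, in contrast with Runge--Kutta schemes where extraneous equilibria typically appear as parasitic zeros of higher-order polynomials in the step size.
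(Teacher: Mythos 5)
Your proof is correct. The paper in fact states this lemma without any proof at all (there is no argument in the body and no corresponding appendix section), so your double-inclusion verification supplies exactly the omitted content, and it is the natural argument: the forward inclusion from the dichotomy $x^*=0$ or $f_+=f_-$, and the reverse inclusion by clearing the strictly positive denominators $1+hf_-$, $1+hg_-$ on the positive quadrant. Your explicit restriction to $(\mathbb{R}^+)^2$, where positivity of $f_\pm$, $g_\pm$ guarantees the denominators never vanish for any $h>0$, is a point the paper glosses over and is worth keeping; your closing remark correctly identifies why the Mickens-type nonlocal term $x_{k+1}f_-(x_k,y_k)$ prevents ghost fixed points.
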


\subsection{Stability and instability}
\label{stability-nsfdm}

The stability of equilibrium point of $(E)$ under discretization will correspond to the stability of the fixed point of the map $\varphi_{NS,h}$. We have :

\begin{theorem}
The NSFD scheme preserves the stability nature of the origin and equilibrium points of type $E_1$ or $E_2$ for arbitrary $h$. 
\label{thmstab1}
\end{theorem}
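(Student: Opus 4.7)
The plan is to linearise the map $\varphi_{NS,h}$ at each of the three equilibria considered and to verify by direct algebra on the resulting eigenvalue formulas that the discrete stability criterion $|\mu|<1$ is equivalent to the continuous criterion $\operatorname{Re}(\lambda)<0$ recorded in the Lemmas of Section~4. The key structural point that makes the computation work for arbitrary $h$ is that each component of $\varphi_{NS,h}$ carries an explicit factor of $x_k$ or $y_k$, which forces the Jacobian at each of the equilibria $(0,0)$, $(x_\sharp,0)$ and $(0,y_\sharp)$ to be block-triangular; the eigenvalues are then simply the diagonal entries and no further spectral analysis is needed.

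First, I would treat the origin. Differentiating the map at $(0,0)$, the factor $x$ (resp.\ $y$) in the first (resp.\ second) component annihilates the partial derivative with respect to the other variable, so $D\varphi_{NS,h}(0,0)$ is diagonal with entries
$$\mu_1^0 \;=\; \frac{1+hf_+(0,0)}{1+hf_-(0,0)}, \qquad \mu_2^0 \;=\; \frac{1+hg_+(0,0)}{1+hg_-(0,0)}.$$
Both are strictly positive, since $f_\pm,g_\pm>0$, so the condition $|\mu_i^0|<1$ reduces to the algebraic inequality $hf_+(0,0)<hf_-(0,0)$ (resp.\ its $g$ analogue), which is exactly the condition of Lemma~\ref{stabilityO} governing the continuous stability of the origin.

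Next, for $E_1=(x_\sharp,0)$, I would apply the product and quotient rules to each component and, using the equilibrium relation $f_+(x_\sharp,0)=f_-(x_\sharp,0)$ to cancel the common $\frac{1+hf_+}{1+hf_-}=1$ factor, simplify the Jacobian to the upper-triangular matrix with diagonal entries
$$\mu_1 \;=\; 1+\frac{h\,\lambda_1^1}{1+hf_-(x_\sharp,0)}, \qquad \mu_2 \;=\; \frac{1+hg_+(x_\sharp,0)}{1+hg_-(x_\sharp,0)},$$
where $\lambda_1^1=x_\sharp(\partial_x f_+ - \partial_x f_-)(x_\sharp,0)$ as in Lemma~\ref{stability-E1-E2}. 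The $(2,1)$ entry vanishes thanks to the surviving factor of $y$ in the second component. The eigenvalue $\mu_2$ is handled exactly as at the origin. For $\mu_1$, the denominator $1+hf_-(x_\sharp,0)$ is strictly positive, so the sign of $\mu_1-1$ coincides with the sign of $\lambda_1^1$; this yields the desired equivalence between the discrete and continuous linear-stability conditions. The case $E_2$ follows by the same argument after swapping the roles of $(x,f)$ and $(y,g)$.

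The main calculation to carry out with care is the Jacobian at $E_1$ (and, symmetrically, at $E_2$): specifically, the clean cancellation produced by the equilibrium identity $f_+=f_-$ that collapses the product-rule expansion to a single term proportional to $\lambda_1^1$. Once that simplification is recorded, the rest of the argument is a short sign analysis relying on the strict positivity of $f_\pm$ and $g_\pm$, which keeps the denominators $1+hf_-$ and $1+hg_-$ bounded away from zero and hence ties the discrete stability or instability classification to the continuous one independently of the step size.
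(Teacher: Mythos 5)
Your proposal follows essentially the same route as the paper's own proof in the appendix: compute $D\varphi_{NS,h}$, observe that it is diagonal at the origin and upper-triangular at $(x_\sharp,0)$ (lower-triangular at $(0,y_\sharp)$), read off the eigenvalues $\frac{1+hf_+}{1+hf_-}$, $\frac{1+hg_+}{1+hg_-}$ and $1+hx_\sharp\frac{(\partial_x f_+-\partial_x f_-)}{1+hf_-}(x_\sharp,0)$, and compare with the continuous eigenvalues of Lemmas \ref{stabilityO} and \ref{stability-E1-E2}; the difference between your denominator $1+hf_-$ and the paper's $1+hf_+$ is immaterial since $f_+=f_-$ at the equilibrium. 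One caveat, which your write-up shares with the paper: you announce the equivalence of $|\mu|<1$ with $\mathrm{Re}(\lambda)<0$, but for the eigenvalue $\mu_1$ at $E_1$ your sign analysis only establishes $\mu_1<1$ when $\lambda^1_1<0$, not the lower bound $\mu_1>-1$. Writing $c=\lambda^1_1$ and $a=f_+(x_\sharp,0)>0$, one has $\mu_1=1+\frac{hc}{1+ha}\to 1+c/a$ as $h\to\infty$, so $\mu_1>-1$ for arbitrary $h$ requires $c\geq -2a$, which is not among the hypotheses; if $c<-2a$ the discrete fixed point loses stability for $h$ beyond a finite threshold. This is an incompleteness of the ``arbitrary $h$'' claim present in the source as well, rather than a divergence between your argument and the paper's, but if you state the equivalence $|\mu|<1\Leftrightarrow\mathrm{Re}(\lambda)<0$ you should either verify both inequalities or restrict the claim accordingly.
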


For equilibrium points belonging to the family $E_3$, we have not preservation of the stability nature unconditionally with respect to the parameter $h$ but only for a sufficiently small one. 

\begin{theorem}
If $(S_{E_3})$ are satisfied then there exist a constant $C(E_3)$ such that for all $0<h<C(E_3)$, $E_3$ is a stable fixed point for $\varphi_{NS,h}$. 
\label{thmstab2}
\end{theorem}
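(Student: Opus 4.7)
The plan is to analyze the spectrum of $D\varphi_{NS,h}(x_\star,y_\star)$ and apply the Schur--Cohn (Jury) criterion in two dimensions: for a $2\times 2$ matrix $M$ with trace $T_h$ and determinant $D_h$, both eigenvalues lie strictly inside the open unit disk if and only if $1-T_h+D_h>0$, $1+T_h+D_h>0$, and $1-D_h>0$. Combined with the discrete analogue of Theorem~\ref{stablin} stated in Section~2.2, this yields asymptotic stability of the fixed point.

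First I would compute $D\varphi_{NS,h}(x_\star,y_\star)$. Using $f_+(x_\star,y_\star)=f_-(x_\star,y_\star)$ and $g_+(x_\star,y_\star)=g_-(x_\star,y_\star)$, the quotient $(1+hf_+)/(1+hf_-)$ equals $1$ at $E_3$, and a direct differentiation gives
\[
D\varphi_{NS,h}(x_\star,y_\star)=
\begin{pmatrix}
1+\dfrac{ha}{\alpha} & \dfrac{hb}{\alpha}\\[1mm]
\dfrac{hc}{\beta} & 1+\dfrac{hd}{\beta}
\end{pmatrix},
\]
where $a,b,c,d$ are the entries of the continuous Jacobian $D\varphi(x_\star,y_\star)$ from Lemma~\ref{stability-E3}, and $\alpha=1+hf_-(x_\star,y_\star)>0$, $\beta=1+hg_-(x_\star,y_\star)>0$. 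A short calculation then yields
\[
T_h=2+\frac{ha}{\alpha}+\frac{hd}{\beta},\qquad
D_h=1+\frac{ha}{\alpha}+\frac{hd}{\beta}+\frac{h^2 D}{\alpha\beta},
\]
where $T=a+d$ and $D=ad-bc$ are the trace and determinant appearing in Lemma~\ref{stability-E3}.

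Next I would verify the three Jury conditions. The algebraic cancellations give the striking identity
\[
1-T_h+D_h=\frac{h^2 D}{\alpha\beta},
\]
which is strictly positive for every $h>0$ by the hypothesis $D>0$ in $(S_{E_3})$. For the second condition, $1+T_h+D_h\to 4$ as $h\to 0$, hence it is positive on some interval $(0,C_1)$. For the third, simplification yields
\[
1-D_h=-\frac{h}{\alpha\beta}\bigl(a\beta+d\alpha+hD\bigr),
\]
and the bracket tends to $a+d=T<0$ as $h\to 0$, so $1-D_h>0$ on some interval $(0,C_2)$. Setting $C(E_3)=\min(C_1,C_2)$ yields the conclusion.

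The delicate step is the third condition: in the limit $h\to 0$ both $T_h\to 2$ and $D_h\to 1$, so the eigenvalues of $D\varphi_{NS,h}(x_\star,y_\star)$ tend to $1$ and the Schur--Cohn test becomes degenerate. This is precisely why unconditional preservation fails for $E_3$ (in contrast to Theorem~\ref{thmstab1}), and why the hypothesis $T<0$ from $(S_{E_3})$ is essential to control the sign of the leading-order term in the $h$-expansion of $1-D_h$. Everything else reduces to the elementary observations above.
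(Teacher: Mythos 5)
Your proposal is correct and follows essentially the same route as the paper: the paper's proof also applies the Schur--Cohn/Jury test (stated there as a lemma on the roots of $\gamma^2-\alpha\gamma+\beta=0$) to the trace and determinant of $D\varphi_{NS,h}(x_\star,y_\star)$, derives the same key identity $1-T_h+D_h=h^2D/\bigl[(1+hf_+)(1+hg_+)\bigr]$, and handles the remaining two conditions by the sign of $T<0$ for $h$ small. The only cosmetic difference is that the paper extracts semi-explicit bounds on $h$ (via an auxiliary quantity $C$) where you argue by continuity as $h\to 0$.
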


\subsection{A remark concerning non-locality and weighted time step}

R. Mickens has derived many "tricks" in order to preserve particular dynamical behaviour. The one used in this paper is a non-local approximation of a given function. There exists also the possibility to use a weighted time step. This is done for example in \cite{dimitrov2006} where the authors mix the two tricks in order to preserve the stability/positivity for a particular case of our model. This approach gives in our case the following numerical scheme :

\begin{definition}
Let $\varphi(h)=h+O(h^2)$ be a nonnegative function. The extended NSFD scheme of $(E)$ is given by
$$
\begin{array}{lll}
\di\frac{x_{k+1}-x_k}{\varphi(h)} & = & x_k f_+(x_k,y_k)-x_{k+1}f_-(x_k,y_k), \\
\di\frac{y_{k+1}-y_k}{\varphi(h)} & = & y_k g_+(x_k,y_k)-y_{k+1}g_-(x_k,y_k) .
\end{array}
$$
\end{definition}

This scheme defines a natural map 
\begin{equation*}
\varphi_{ENS,h}(x_k,y_k)=\begin{pmatrix}
x_k \left( \frac{1 + \varphi(h) f_+(x_k,y_k)}{1 + \varphi(h) f_-(x_k,y_k)} \right) \\ 
y_k \left( \frac{1 + \varphi(h) g_+(x_k,y_k)}{1 + \varphi(h) g_-(x_k,y_k)} \right)
\end{pmatrix} 
\end{equation*}

Our results can be extended to this new numerical scheme. However, the main properties have nothing to do with the choice of a weighted time increment but are only induced by the non-local approximation.

\section{Numerical examples}

Our aim in this Section is to illustrate the advantages of the non-standard scheme with respect to other classical methods including the Euler scheme or the Runge-Kutta method of order 2 or 4 on a specific example. In particular, we provide simulations illustrating some well-known numerical artefacts produced by these methods and which are corrected by the non-standard scheme. \\

We consider the following class of model :

$$
\begin{array}{lll} 
\di\frac{dx}{dt} & = & x \left(b-\left(bx+\frac{ay}{c+x}\right) \right), \\
\di\frac{dy}{dt} & = & y \left(\frac{x}{c+x}-d \right),
\end{array}
$$
where $a,b,c,d$ are real constants.\\

We use two particular sets of values for our simulations :
\begin{itemize}
\item {\bf Model 1} : $a=2,b=1,c=0.5,d=6$.
\item {\bf Model 2} : $a=2,b=1,c=1,d=0.2$.
\end{itemize} 

\subsection{Equilibrium points artefacts}

Model 1 possesses two equilibrium points corresponding to the origin with eigenvalues $\lambda^0_1=1$, $\lambda^0_2=-6$ and an equilibrium point of type $E_1$ given by $P_1 =(1,0)$ with eigenvalues $\lambda^1_1=-1$, $\lambda^1_2=\frac{-16}{3}$.\\

In figure \ref{fig_ex1}, we provide numerical simulations for the initial conditions $x_0=15$, $y_0=0.1$. The main result is that the NSFD scheme has a better dynamical behaviour than the Runge-Kutta method of order $2$. In particular the Runge-Kutta method of order 2 produces for $h=0.1$ a virtual equilibrium point. 

\begin{figure}[h]
        \centering
        \begin{subfigure}[b]{0.45\textwidth}
                \includegraphics[width=\textwidth]{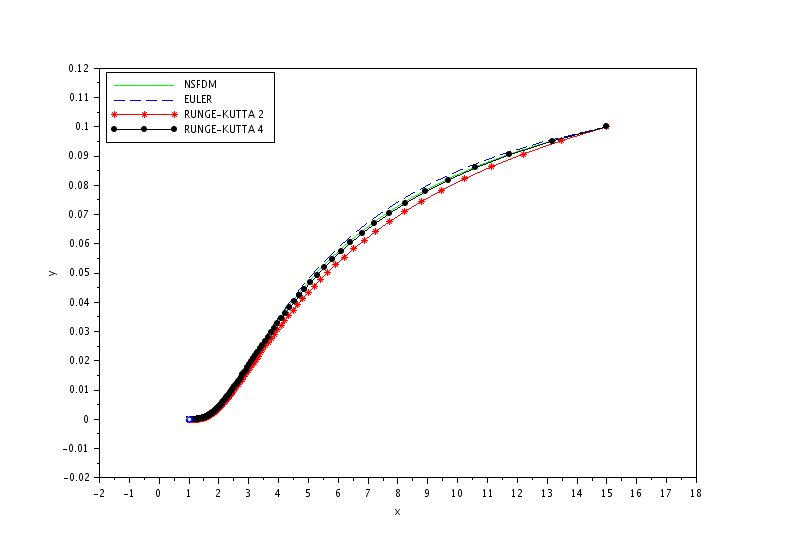}
                \caption{$h=0.01$}
        \end{subfigure}%
        ~ 
        \begin{subfigure}[b]{0.45\textwidth}
                \includegraphics[width=\textwidth]{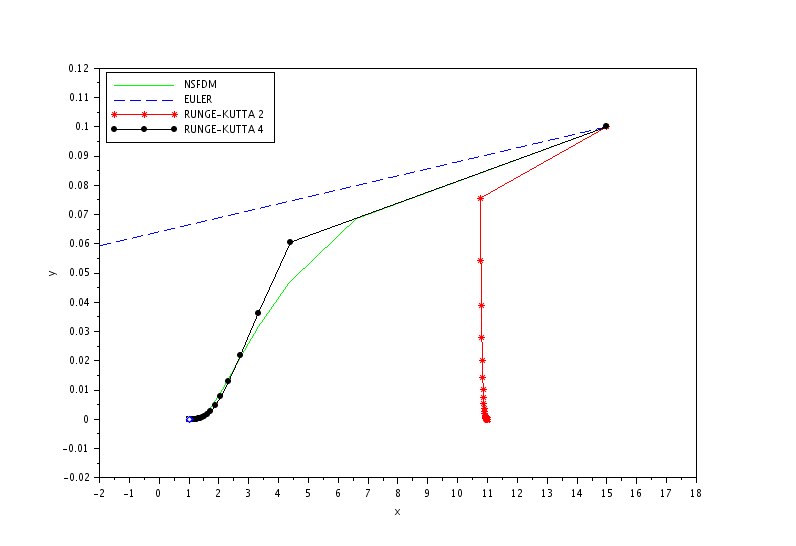}
                \caption{$h=0.1$}
        \end{subfigure}
                		
        \begin{subfigure}[b]{0.45\textwidth}
                \includegraphics[width=\textwidth]{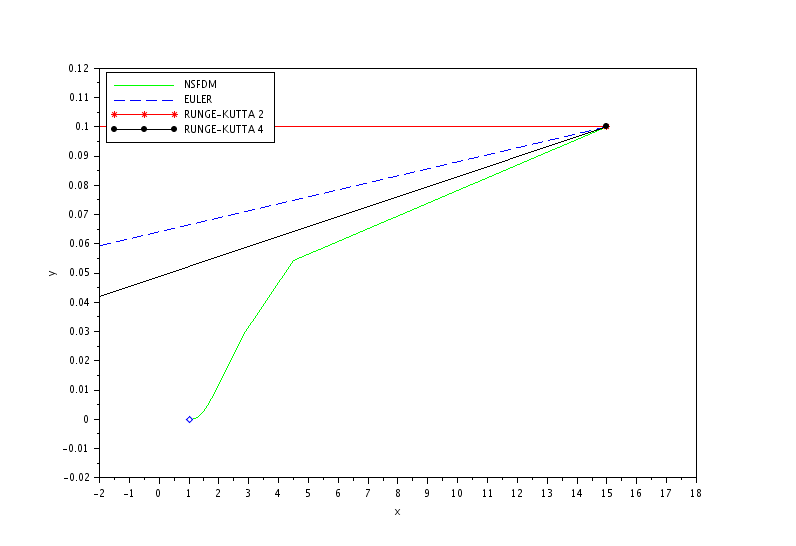}
                \caption{$h=0.2$}
        \end{subfigure}
        \caption{Numerical simulations of Example 1 with $x_0=15$, $y_0=0.1$.}\label{fig_ex1}
\end{figure}

Moreover, the NSFD scheme behaves equivalently to the Runge-Kutta method of order 4. From the computational point of view, this result is extremely strong as the algorithmic complexity of the NSFD compares to the Runge-Kutta of order 4 is very weak.

\subsection{Stability/instability artefacts}

We use again Model 1. The simulations are made with the initial condition $x_0=0.3$, $y_0=7.5$ and are given in Figure \ref{figex2}. 

\begin{figure}
        \centering
        \begin{subfigure}[b]{0.45\textwidth}
                \includegraphics[width=\textwidth]{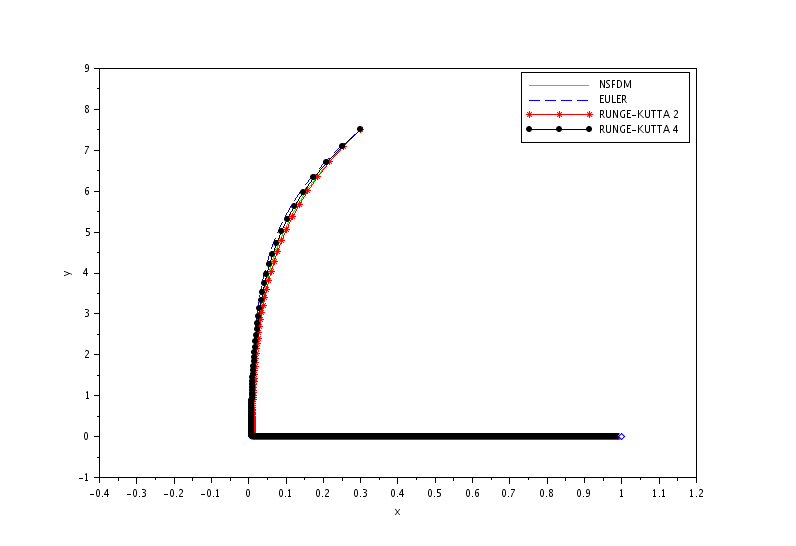}
                \caption{$h=0.01$}
        \end{subfigure}%
        ~ 
        \begin{subfigure}[b]{0.45\textwidth}
                \includegraphics[width=\textwidth]{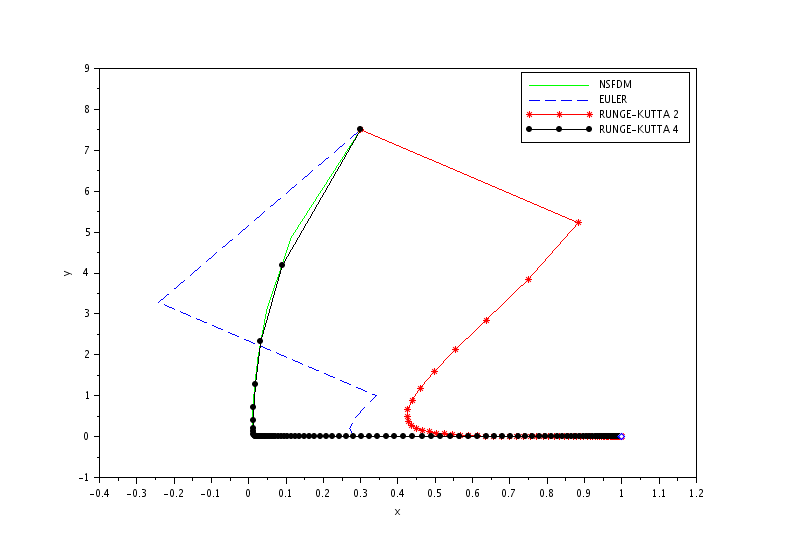}
                \caption{$h=0.1$}
        \end{subfigure}
                		
        \begin{subfigure}[b]{0.45\textwidth}
                \includegraphics[width=\textwidth]{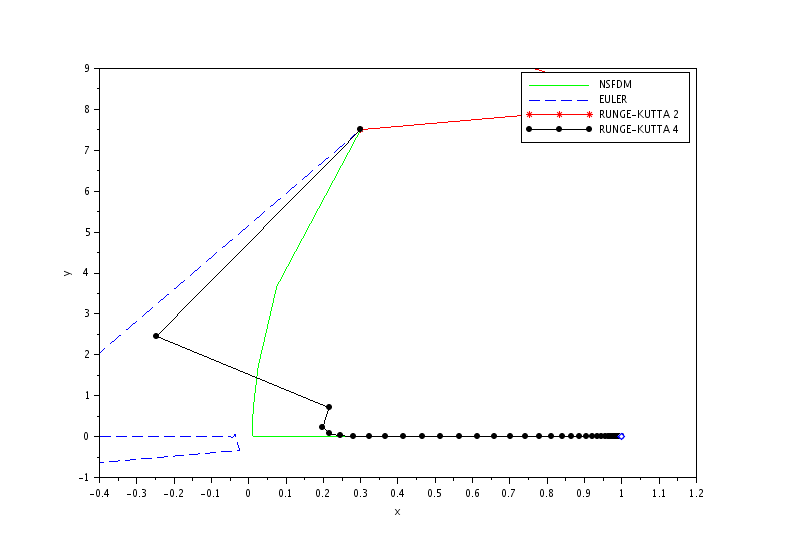}
                \caption{$h=0.2$}
        \end{subfigure}
        ~
        \begin{subfigure}[b]{0.45\textwidth}
                \includegraphics[width=\textwidth]{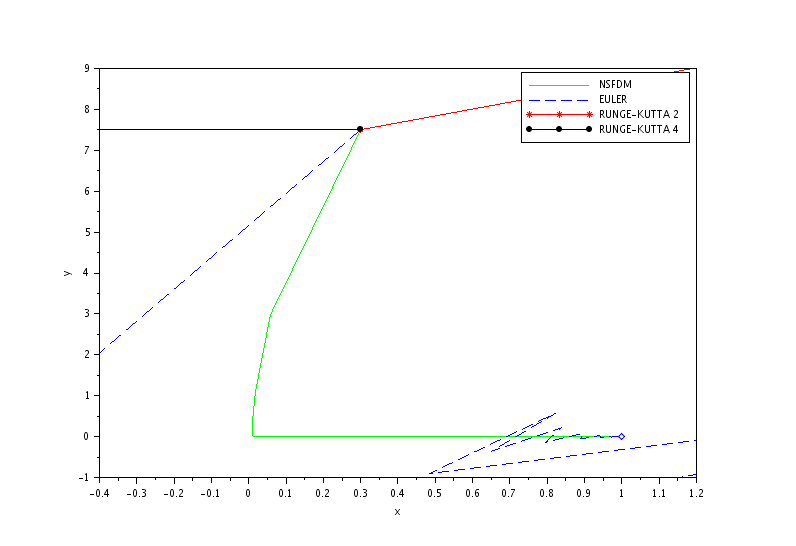}
                \caption{$h=0.3$}
        \end{subfigure}

        \begin{subfigure}[b]{0.45\textwidth}
                \includegraphics[width=\textwidth]{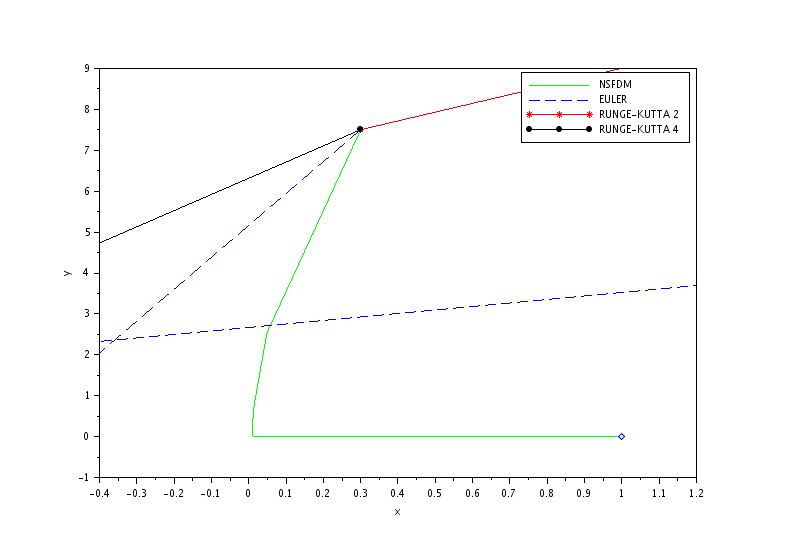}
                \caption{$h=0.4$}
        \end{subfigure}
        ~
        \begin{subfigure}[b]{0.45\textwidth}
                \includegraphics[width=\textwidth]{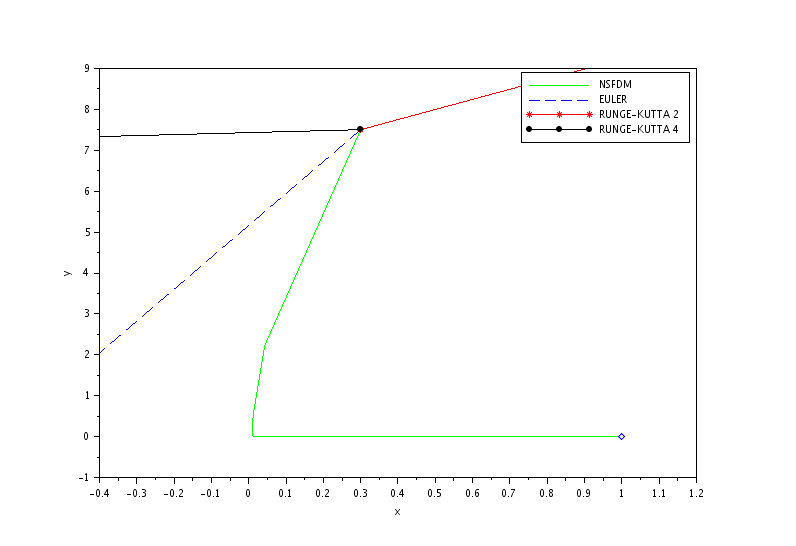}
                \caption{$h=0.5$}
        \end{subfigure}
        \caption{Numerical simulations of Example 2 with $x_0=0.3$, $y_0=7.5$.}
\label{figex2}
\end{figure}

The NSFD scheme reproduces the correct dynamical behaviour already for $h=0.5$. In the contrary, the Euler, Runge-Kutta of order 2 or 4 do not match the real dynamics for $h$ from $h=0.5$ to $h=0.3$. The Runge-Kutta of order 4 produces a better agreement for $h=0.2$ but with artificial oscillations. The correct behaviour is only recovered for $h=0.1$ for the Runge-Kutta of order 4  and $h=0.01$ for the others.\\

Another example with simulations done with initial conditions $x_0=0.4$, $y_0=0.4$ is given in Figure \ref{fig_ex3}.

\begin{figure}[h]
        \centering
        \begin{subfigure}[b]{0.45\textwidth}
                \includegraphics[width=\textwidth]{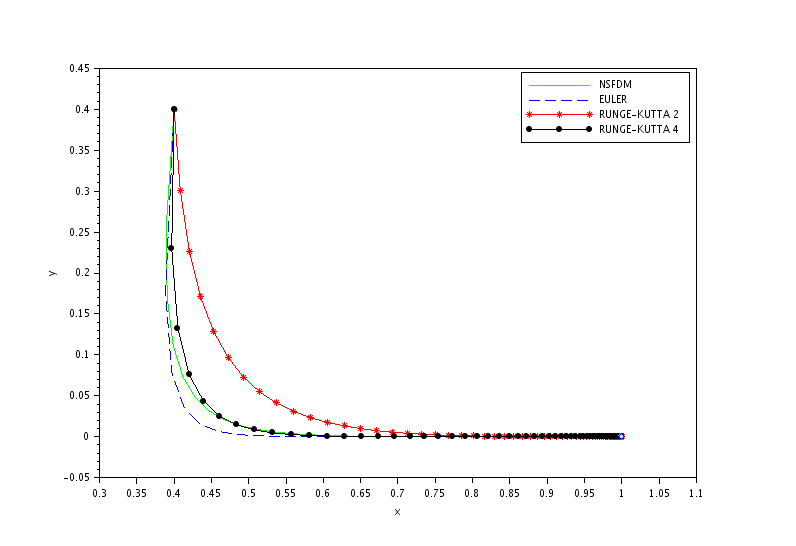}
                \caption{$h=0.1$}
        \end{subfigure}%
        ~ 
        \begin{subfigure}[b]{0.45\textwidth}
                \includegraphics[width=\textwidth]{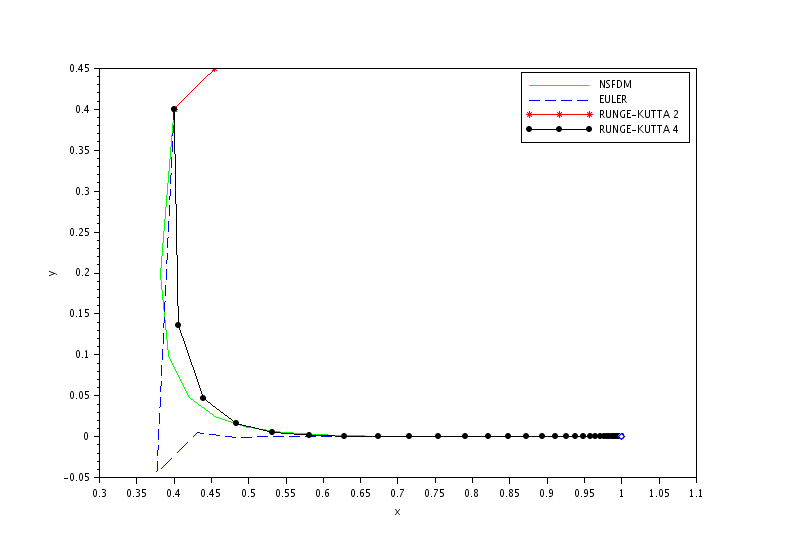}
                \caption{$h=0.2$}
        \end{subfigure}
                		
        \begin{subfigure}[b]{0.45\textwidth}
                \includegraphics[width=\textwidth]{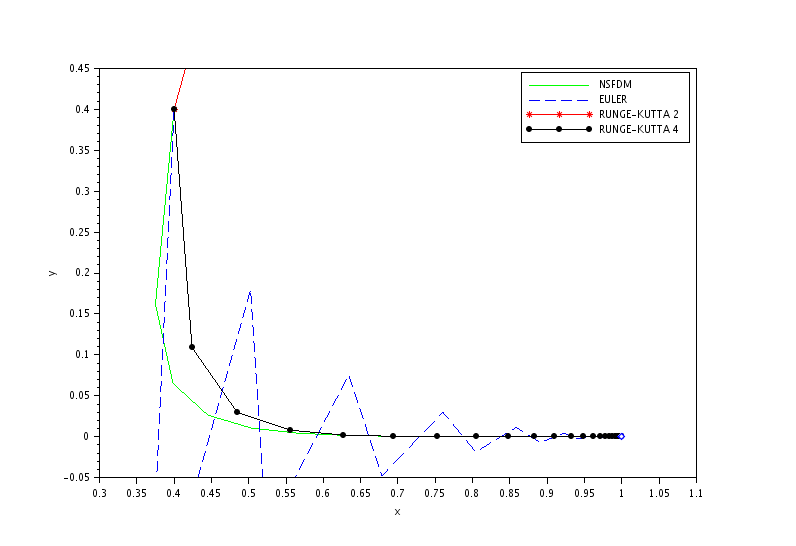}
                \caption{$h=0.3$}
        \end{subfigure}
        ~
        \begin{subfigure}[b]{0.45\textwidth}
                \includegraphics[width=\textwidth]{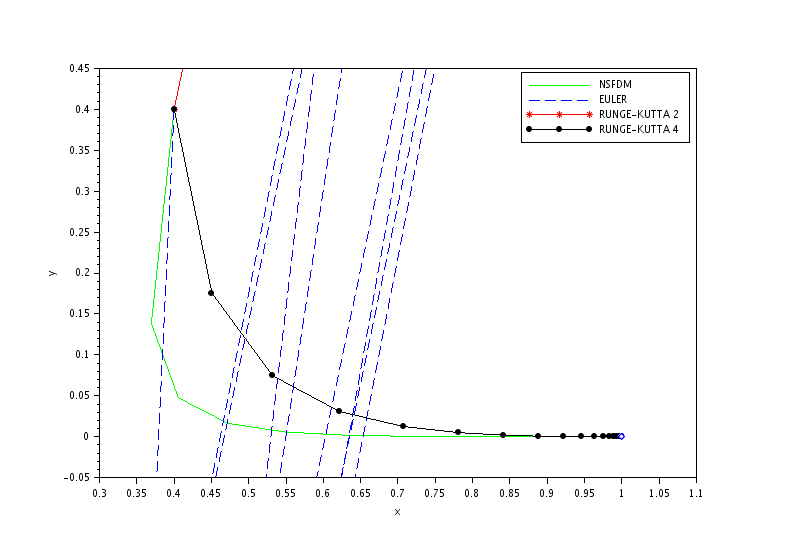}
                \caption{$h=0.4$}
        \end{subfigure}

        \begin{subfigure}[b]{0.45\textwidth}
                \includegraphics[width=\textwidth]{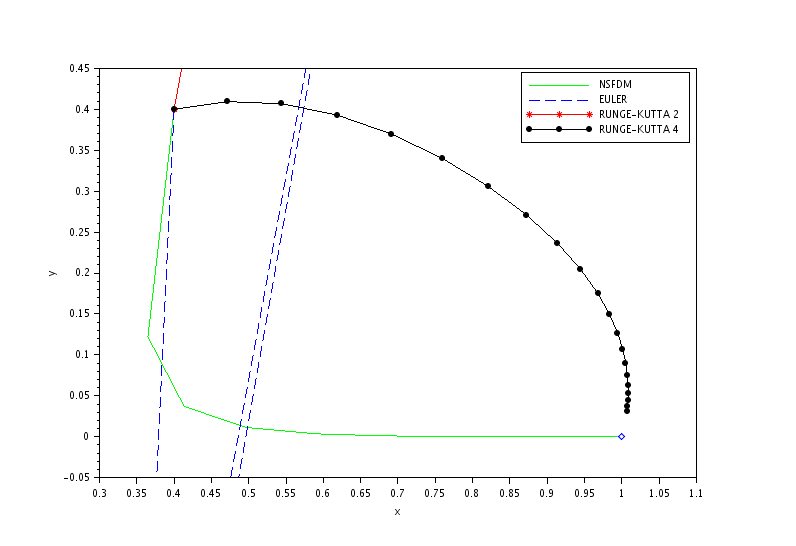}
                \caption{$h=0.5$}
        \end{subfigure}
        ~
        \begin{subfigure}[b]{0.45\textwidth}
                \includegraphics[width=\textwidth]{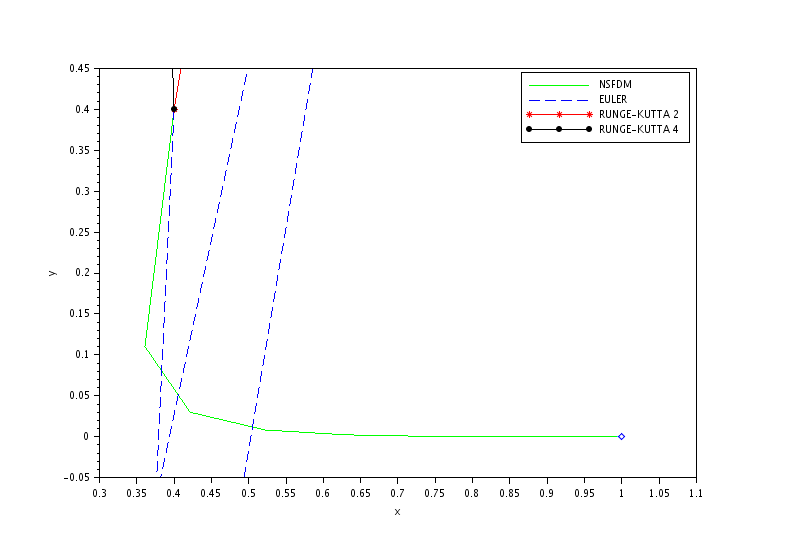}
                \caption{$h=0.6$}
        \end{subfigure}
        \caption{Numerical simulations of Example 1 with $x_0=0.4$, $y_0=0.4$.}
        \label{fig_ex3}
\end{figure}

The NSFD scheme reproduces the correct dynamical behaviour already for $h=0.6$. The Euler, Runge-Kutta of order 2 does not match the real dynamics for $h$ from $h=0.6$ to $h=0.2$. The Runge-Kutta of order 4 produces a better agreement for $h=0.4$ but with a completely different trajectory for $h=0.5$ even if the convergence to the equilibrium point $P_1$ is respected. The correct behaviour is only recovered for $h=0.1$ for the others.\\

Model 2 possesses three equilibrium points corresponding to the origin with eigenvalues $\lambda^0_1=1$, $\lambda^0_2=-\frac{1}{5}$, and one fixed point in the family $E_1$ and $E_3$ : $P_1=(1,0)$ with eigenvalues $\lambda^1_1=-1$, $\lambda^1_2=\frac{3}{10}$ and $P_3=(\frac{1}{4},\frac{15}{32})$ with eigenvalues $\lambda^3_1=\frac{1}{20} \left(-1+i \sqrt{47}\right)$, $\lambda^3_2=\frac{1}{20} \left(-1-i \sqrt{47}\right)$. 

The equilibrium point $P_3$ is stable. Theorem \ref{thmstab2} ensures that the $NSFDM$ scheme preserves the stability as long as $0<h<C=1$. However, the benefit of using the NSFD scheme is not as evident as in the previous case. Indeed, as displays in Figure \ref{fig_ex4} only the Runge-Kutta of order 4 converges to the equilibrium point for $h$ from $h=4$ to $h=0.01$. For $h$ between $h=4$ and $h=1$ the $NSFDM$ has a periodic limit cycle. The Runge-Kutta of order 2 converge to the equilibrium point from $h=2$ to $h=0.01$ but for $h=4$ it diverges. The Euler has also a periodic limit cycle limit for $h$ from $h=2$ to $h=1$.

\begin{figure}[h]
        \centering
        \begin{subfigure}[b]{0.45\textwidth}
                \includegraphics[width=\textwidth]{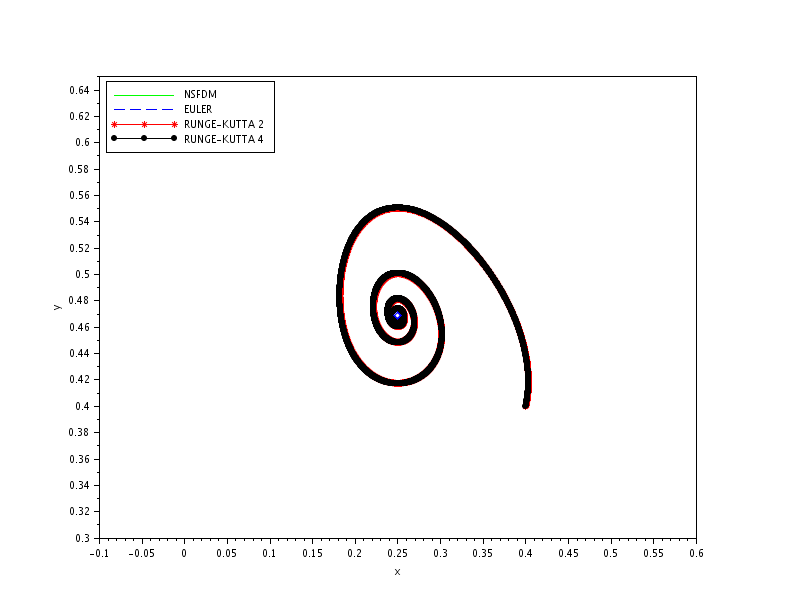}
                \caption{$h=0.01$}
        \end{subfigure}%
        ~ 
        \begin{subfigure}[b]{0.45\textwidth}
                \includegraphics[width=\textwidth]{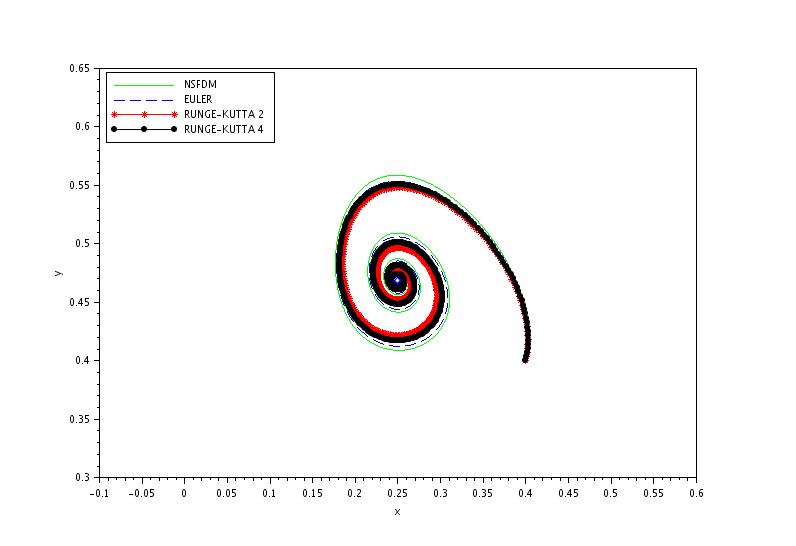}
                \caption{$h=0.1$}
        \end{subfigure}
                		
        \begin{subfigure}[b]{0.45\textwidth}
                \includegraphics[width=\textwidth]{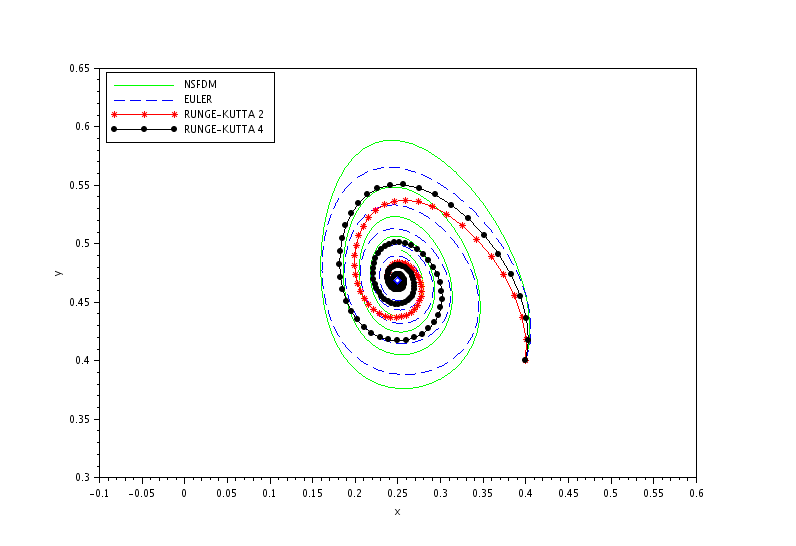}
                \caption{$h=0.5$}
        \end{subfigure}
        ~
        \begin{subfigure}[b]{0.45\textwidth}
                \includegraphics[width=\textwidth]{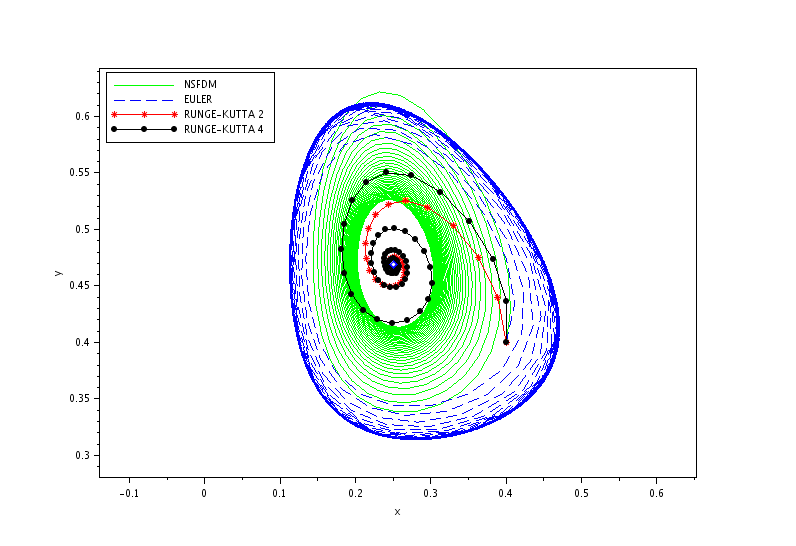}
                \caption{$h=1$}
        \end{subfigure}

        \begin{subfigure}[b]{0.45\textwidth}
                \includegraphics[width=\textwidth]{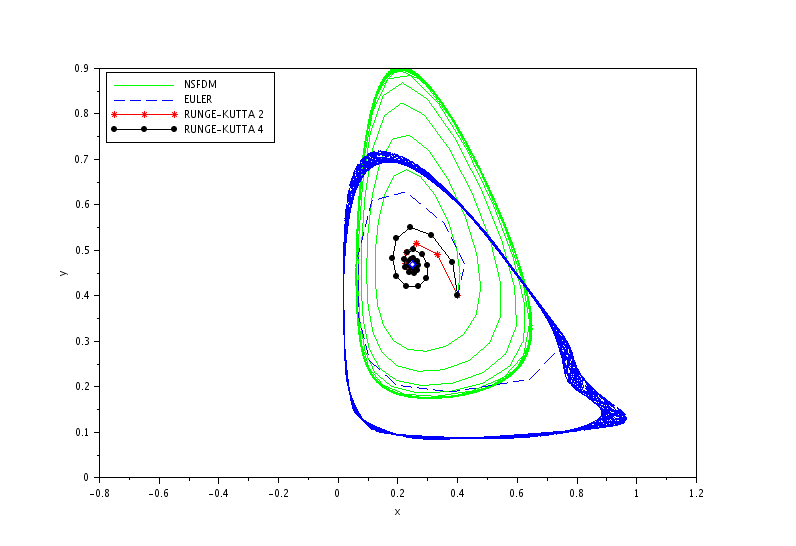}
                \caption{$h=2$}
        \end{subfigure}
        ~
        \begin{subfigure}[b]{0.45\textwidth}
                \includegraphics[width=\textwidth]{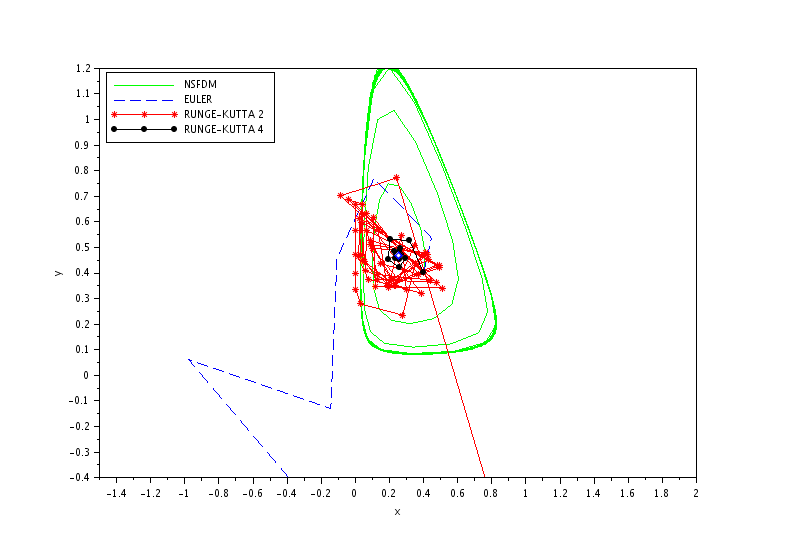}
                \caption{$h=4$}
        \end{subfigure}
        \caption{Numerical simulations of Example 4 with $x_0=0.4$, $y_0=0.4$.}\label{fig_ex4}
\end{figure}

\subsection{Invariance/positivity artefacts}

By construction, the NSFD respects the positivity of the systems unconditionally with respect to the time increment $h$ by Lemma \ref{lempos}. However, this is not the case for the classical numerical scheme : 
\begin{itemize}
\item Figure \ref{fig_ex1} shows that the Euler and the Runge-Kutta methods of order 2 or 4 do not respect the positivity property. 
\item Figure \ref{figex2} shows that the Euler method does not respect the positivity property for $h$ from $h=0.5$ to $h=0.1$. In the same way, the Runge-Kutta of order 4 does not respect positivity for $h$ from $h=0.5$ to $h=0.2$.
\end{itemize}

\section{Conclusion}

The nonstandard scheme studied in this paper generalizes previous results obtained by D.T. Dimitrov and H.V. Kojouharov in a series of papers \cite{dimitrov2006}, \cite{dimitrov2007} and \cite{dimitrov2008}. The convergence is proved as well as the fact that the scheme preserves the fixed points and their stability nature and also the positivity. The main advantages of this scheme are illustrated via numerical examples. These simulations show at least two things : 
\begin{itemize}
\item First, most of the time the non-standard scheme behaves better or equivalently to a Runge-Kutta methode of order 4. The algorithmic complexity of the non-standard scheme being comparable to the Euler scheme, the gain in term of computation is very huge. 

\item Second, the respect of dynamical constraint lead to a scheme which gives the good dynamical behaviour even for large time increment. This gives also a very big computational advantage.
\end{itemize}

The positivity or more generally domain invariance is an important issue for many applications, in particular in biology, and provide the first test to select models. This question is fundamental when one is dealing with stochastic differential equations (see \cite{cps1} and \cite{cps2}) as simulations are used to validate a given model. An interesting issue is then to develop {\it stochastic qualitative dynamical numerical scheme} for stochastic differential equations. A first step in this direction has been made by F. Pierret \cite{pierret} by the construction of a non-standard Euler-Murayama scheme.

\begin{appendix}

\section{Proof of Lemmas \ref{stabilityO} and \ref{stability-E1-E2}}

\subsection{Proof of Lemma \ref{stabilityO}} 
\label{proof-stabilityO}

The Jacobian matrix of $\phi$ is given by 
\begin{equation}
\label{jacobian}
D\varphi(x,y) = \begin{pmatrix}
\left(f_+-f_-\right)(x,y)+x\left(\partial_x f_+ - \partial_x f_- \right)(x,y) & x\left(\partial_y f_+ - \partial_y f_- \right)(x,y) \\ 
y\left(\partial_x g_+ - \partial_x g_- \right)(x,y) & \left(g_+-g_-\right)(x,y) +y\left(\partial_y g_+ - \partial_y g_- \right)(x,y)
\end{pmatrix}.
\end{equation}

At the origin the Jacobian matrix reduces to 
\begin{equation*}
D\varphi(0,0) = \begin{pmatrix}
\left(f_+-f_-\right)(0,0) & 0) \\ 
0 & \left(g_+-g_-\right)(0,0)
\end{pmatrix} .
\end{equation*}
The eigenvalues are then given by $\lambda^0_1 = \left(f_+-f_-\right)(0,0)$ and $\lambda^0_2=\left(g_+-g_-\right)(0,0)$.

\subsection{Proof of Lemma \ref{stability-E1-E2}}
\label{proof-E1-E2}

Using equation (\ref{jacobian}), we deduce that the Jacobian of $\phi$ at an equilibrium point of the family $E_1$ is given by
\begin{equation}
D\varphi(x_\sharp,0) = \begin{pmatrix}
x_\sharp\left(\partial_x f_+ - \partial_x f_- \right)(x_\sharp,0) & x_\sharp\left(\partial_y f_+ - \partial_y f_- \right)(x_\sharp,0) \\ 
0 & \left(g_+-g_-\right)(x_\sharp,0)
\end{pmatrix}
.
\end{equation}
The eigenvalues are easily obtained as $\lambda^1_1 = x_\sharp\left(\partial_x f_+ - \partial_x f_- \right)(x_\sharp,0)$ and $\lambda^1_2=\left(g_+-g_-\right)(x_\sharp,0)$.\\ 

In the same way, the Jacobian at an equilibrium point of the family $E_2$ has eigenvalues $\lambda^2_1=f_+(0,y_\sharp)-f_-(0,y_\sharp)$ and $\lambda^2_2=y_\sharp\left(\partial_y g_+ - \partial_y g_- \right)(0,y_\sharp)$. 

\section{Proof of Theorem \ref{thmconv}}
\label{proof-convergence}

As usual in the study of numerical algorithm (see \cite{dema},Chap.VIII,p.226-228), we prove {\it consistency} and {\it stability} of the NSFD scheme and then {\it convergence} (see \cite{dema},Corollaire,p.227).\\

The NSFD sheme is given by
\begin{align*} 
x_{k+1} &= x_k \left( \frac{1 + h f_+(x_k,y_k)}{1 + h f_-(x_k,y_k)} \right), \\
y_{k+1} &= y_k \left( \frac{1 + h g_+(x_k,y_k)}{1 + h g_-(x_k,y_k)} \right).
\end{align*}
A Taylor expansion with remainder of each component gives
\begin{align*}
x(t_{k+1}) &= x(t_k) + h\frac{dx}{dt}(t_k)  + \frac{1}{2} h^2 \frac{d^2 x}{dt^2}(t_k + \theta_x h)  = x(t_k) + h\varphi^1(x(t_k), y(t_k)) + \tau^1_{k} \\
y(t_{k+1}) &= y(t_k) + h\frac{dy}{dt}(t_k)  + \frac{1}{2} h^2 \frac{d^2 y}{dt^2}(t_k + \theta_y h)  = y(t_k) + h\varphi^2(x(t_k), y(t_k)) + \tau^2_{k}
\end{align*}
for some real $\theta_x$ and $\theta_y$ between $0$ and $1$. This defines the {\it local truncation error} $\tau_k$. 

Let $X_k=(x_k,y_k)$ and $X(t_k)=(x(t_k),y(t_k))$ for all $k\ge 0$. Subtraction of the NSFD scheme and Taylor expansions gives a difference equation for the error

\begin{equation*}
e_{k+1} = X_{k+1} - X(t_{k+1}) = e_k +h \begin{pmatrix} x_k \left(\frac{f_+-f_-}{1+hf_-}\right)(X_k) - x(t_k) \left(f_+-f_-\right)(X(t_k)) \\ y_k \left(\frac{g_+-g_-}{1+hg_-}\right)(X_k)  - y(t_k) \left(g_+-g_-\right)(X(t_k)) \end{pmatrix}- \tau_n
\end{equation*}

By hypothesis the first component of $\varphi$ is Lipschitz with constant $L_1$ and the second component is also Lipschitz with constant $L_2$. We denote by $L$ the maximum value between $L_1$ and $L_2$. \\

As $1+hf_-(x,y)\geq 1$ and $1+hg_-(x,y)\geq 1$ for all $x,y \geq 0$ and assume that the local truncation error satisfies a bound $\|\tau_k\|_\infty \leq \tau $ for all $k$ then
\begin{equation}
\|e_{k+1}\|_\infty \le \|e_k\|_\infty + hL\|e_k\|_\infty +\tau .
\end{equation}
Using the {\it discrete Gronwall Lemma} (see \cite{dema},p.333) we obtain
\begin{equation}
\|e_k\|_\infty \leq  e^{LT}\|e_0\|_\infty + \frac {e^{LT}-1}{LT} k\tau .
\end{equation}
We then have {\it stability}. \\

The local truncation error is equal to
\begin{equation}
\|\tau_k\|_\infty =\sup \left(\left|\frac{1}{2} h^2 \frac{d^2 x}{dt^2}(t_k + \theta_x h)\right|,\left|\frac{1}{2} h^2 \frac{d^2 y}{dt^2}(t_k + \theta_y h)\right|\right) \leq \frac{1}{2}Mh^2 .
\end{equation}
As $\varphi$ is $C^1([t_0,T],\mathbb{R}^2)$, we deduce that the solution is $C^2([t_0,T],\mathbb{R}^2)$ with bounded derivatives over $[t_0,T]$. As a consequence, we obtain 
\begin{equation}
\|\tau_k\|_\infty \leq \frac{1}{2}Mh^2 .
\end{equation}
We deduce that the NFSD scheme is {\em consistent}. \\

Consistency gives a local bound on $\tau$ and stability allows us to conclude {\em convergence}:

\begin{equation*}
\|X_k - X(t_k)\|_\infty \leq e^{LT} \|X_0-X(t_0)\| + \frac{e^{LT}-1}{LT} \frac{T}{2}Mh  \le O(\|X_0-X(t_0)\|_\infty) + O(h)
\end{equation*}

\section{Proof of Theorem \ref{thmstab1}}

The Jacobian matrix is given by
\begin{align*}
& D\varphi_{NS,h}(x,y) = \\
&\begin{pmatrix}
\frac{1 + h f_+(x,y)}{1 + h f_-(x,y)} + hx \left(\frac{(1+hf_-)\partial_x f_+ - (1+hf_+)\partial_x f_-}{(1+hf_-)^2}\right)(x,y) & hx \left(\frac{(1+hf_-)\partial_y f_+ - (1+hf_+)\partial_y f_-}{(1+hf_-)^2}\right)(x,y) \\ 
hy \left(\frac{(1+hg_-)\partial_x g_+ - (1+hg_+)\partial_x f_-}{(1+hg_-)^2}\right)(x,y) & \frac{1 + h g_+(x,y)}{1 + h g_-(x,y)} + hy \left(\frac{(1+hg_-)\partial_y g_+ - (1+hg_+)\partial_y f_-}{(1+hg_-)^2}\right)(x,y)
\end{pmatrix} .
\end{align*} \\

At the origin, the Jacobian reduces to
\begin{equation*}
D\varphi_{NS,h}(0,0) = \begin{pmatrix}
\frac{1 + h f_+(0,0)}{1 + h f_-(0,0)} & 0 \\ 
0 & \frac{1 + h g_+(0,0)}{1 + h g_-(0,0)}
\end{pmatrix}
\end{equation*}

has eigenvalues 

\begin{equation*}
\gamma^0_1=\frac{1 + h f_+(0,0)}{1 + h f_-(0,0)} \quad \text{and} \quad \gamma^0_2=\frac{1 + h g_+(0,0)}{1 + h g_-(0,0)} \ .
\end{equation*}

The stability conditions $(S_{E_0})$ imply $f_+(0,0)<f_-(0,0)$ and $g_+(0,0)<g_-(0,0)$ then

\begin{equation*}
0<\frac{1 + h f_+(0,0)}{1 + h f_-(0,0)}<1, \quad 0<\frac{1 + h g_+(0,0)}{1 + h g_-(0,0)}<1,
\end{equation*}
for arbitrary $h$. The origin is then stable for arbitrary $h$. \\

At $E_1$ the Jacobian
\begin{equation*}
D\varphi_{NS,h}(x_\sharp,0) = \begin{pmatrix}
1 + hx_\sharp \left(\frac{\partial_x f_+ - \partial_x f_-}{1+hf_+}\right)(x_\sharp,0) & hx_\sharp \left(\frac{\partial_y f_+ - \partial_y f_-}{1+hf_+}\right)(x_\sharp,0) \\ 
0 & \frac{1 + h g_+(x_\sharp,0)}{1 + h g_-(x_\sharp,0)}
\end{pmatrix}
\end{equation*}

has eigenvalues 

\begin{equation*}
\gamma^1_1=1 + hx_\sharp \left(\frac{\partial_x f_+ - \partial_x f_-}{1+hf_+}\right)(x_\sharp,0) \quad \text{and} \quad \gamma^1_2=\frac{1 + h g_+(x_\sharp,0)}{1 + h g_-(x_\sharp,0)} \ .
\end{equation*}

The stability conditions $(S_{E_1})$ imply $x_\sharp\left(\partial_x f_+ - \partial_x f_- \right)(x_\sharp,0)<0$ and $g_+(x_\sharp,0)<g_-(x_\sharp,0)$ then

\begin{equation*}
1 + hx_\sharp \left(\frac{\partial_x f_+ - \partial_x f_-}{1+hf_+}\right)(x_\sharp,0)<1, \quad 0<\frac{1 + h g_+(0,0)}{1 + h g_-(0,0)}<1,
\end{equation*}
for arbitrary $h$ that is to say the fixed point $E_1$ is stable for arbitrary $h$.\\

In a same way we obtain that $E_2$ is a stable fixed points for arbitrary $h$.

\section{Proof of Theorem \ref{thmstab2}}

The proof relies on the following classical result :

\begin{lemma}
Roots of the quadratic equation $\gamma^2-\alpha\gamma+\beta=0$ satisfy $|\gamma_i|<1$ for $i=1,2$ if and only if the following conditions hold :
\begin{enumerate}
\item[(a)] $1+\alpha+\beta >0$
\item[(b)] $1-\alpha+\beta >0$
\item[(c)] $\beta<1$
\end{enumerate}
\label{lemma_roots}
\end{lemma}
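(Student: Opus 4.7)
The plan is to translate conditions (a), (b), (c) into conditions on the values of the polynomial at $\pm 1$ and on Vieta's product, and then split the analysis into two cases according to the sign of the discriminant $\Delta=\alpha^{2}-4\beta$. Writing $p(\gamma)=\gamma^{2}-\alpha\gamma+\beta=(\gamma-\gamma_{1})(\gamma-\gamma_{2})$, one gets $p(1)=1-\alpha+\beta$, $p(-1)=1+\alpha+\beta$ and $\gamma_{1}\gamma_{2}=\beta$. So (a) is $p(-1)>0$, (b) is $p(1)>0$, and (c) is $\gamma_{1}\gamma_{2}<1$. This reformulation makes each condition into a statement about quantities that are easy to interpret in terms of root location.

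In the complex-conjugate case $\Delta<0$, I would write $\gamma_{1,2}=a\pm ib$ with $b\neq 0$. Then $|\gamma_{1}|^{2}=|\gamma_{2}|^{2}=\gamma_{1}\overline{\gamma_{1}}=\gamma_{1}\gamma_{2}=\beta$, so $|\gamma_{i}|<1$ is equivalent to (c). Moreover $p(\pm 1)=(\pm 1-\gamma_{1})(\pm 1-\gamma_{2})=|\pm 1-\gamma_{1}|^{2}>0$ automatically, since $\pm 1\in\mathbb{R}$ cannot coincide with the non-real root $\gamma_{1}$. Thus (a) and (b) hold for free in this case, and both directions of the equivalence are immediate.

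In the real case $\Delta\geq 0$, the roots $\gamma_{1}\leq \gamma_{2}$ are real and the parabola opens upward, so $p(x)>0$ outside $[\gamma_{1},\gamma_{2}]$ and $p(x)\leq 0$ inside. Necessity of (a), (b), (c) under the assumption $\gamma_{1},\gamma_{2}\in(-1,1)$ is immediate from $p(1)=(1-\gamma_{1})(1-\gamma_{2})>0$, $p(-1)=(1+\gamma_{1})(1+\gamma_{2})>0$, and $|\gamma_{1}\gamma_{2}|<1$. For sufficiency, (a) and (b) force $\pm 1\notin[\gamma_{1},\gamma_{2}]$, so the two roots are either both in $(-\infty,-1)$, both in $(-1,1)$, or both in $(1,\infty)$. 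In the first and third of these alternatives, $\gamma_{1}$ and $\gamma_{2}$ have the same sign and each has absolute value $>1$, so $\gamma_{1}\gamma_{2}>1$, which contradicts (c). Only the middle alternative survives.

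The main obstacle, such as it is, lies in this last real-root dichotomy: conditions (a) and (b) alone are insufficient, and the precise role of (c) is to exclude the two "same-side outside the unit interval" configurations of the roots. Once one sees that, the rest is bookkeeping, and the strictness of the inequalities matches the strictness $|\gamma_{i}|<1$ required by the lemma.
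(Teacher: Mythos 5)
Your proof is correct and complete. Note that the paper itself offers no proof of this lemma at all: it is quoted as a ``classical result'' (it is the Jury/Schur--Cohn stability criterion for a quadratic) and used without justification, so there is nothing to compare against; your argument fills a genuine gap in the exposition. The structure is sound: the identifications $p(-1)=1+\alpha+\beta$, $p(1)=1-\alpha+\beta$, $\gamma_1\gamma_2=\beta$ are the right way to read the three conditions, the complex-conjugate case reduces cleanly to $|\gamma_i|^2=\beta$, and in the real case the key observation that (a) and (b) confine the interval $[\gamma_1,\gamma_2]$ to a single component of $\mathbb{R}\setminus\{-1,1\}$, with (c) eliminating the two ``both outside'' components via $\gamma_1\gamma_2>1$, is exactly what is needed. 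One small point worth making explicit if you write this up: conditions (a) and (b) together already force $\beta>-1$ (add them to get $2(1+\beta)>0$), which reconciles your statement with the more familiar form $|\beta|<1$ of the Jury test; your proof never needs this, but it reassures the reader that nothing is lost by stating (c) as the one-sided inequality $\beta<1$.
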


At an equilibrium point $(x_\star,y_\star)$ of type $E_3$ the Jacobian matrix is given by 
\begin{equation}
D\varphi_{NS,h}(x_\star,y_\star) = \begin{pmatrix}
 1+hx_\star\left(\frac{\partial_x f_+ - \partial_x f_- }{1+hf_+}\right)(x_\star,y_\star) & hx_\star\left(\frac{\partial_y f_+ - \partial_y f_- }{1+hf_+}\right)(x_\star,y_\star) \\
hy_\star\left(\frac{\partial_x g_+ - \partial_x g_- }{1+hg_+}\right)(x_\star,y_\star) & 1+hy_\star\left(\frac{\partial_y g_+ - \partial_y g_- }{1+hg_+}\right)(x_\star,y_\star) \\
\end{pmatrix}
\end{equation}
The trace denoted by $T_{\phi_{NS,h}}$ is given by 
\begin{equation*}
T_{\phi_{NS,h}} = 2 + h\left( x_\star\left(\frac{\partial_x f_+ - \partial_x f_- }{1+hf_+}\right)(x_\star,y_\star) + y_\star\left(\frac{\partial_y g_+ - \partial_y g_- }{1+hg_+}\right)(x_\star,y_\star) \right) .
\end{equation*}
and its determinant denoted by $D_{\phi_{NS,h}}$ is equal to
\begin{align*}
D_{\phi_{NS,h}} = & 1 + h\left( x_\star\left(\frac{\partial_x f_+ - \partial_x f_- }{1+hf_+}\right)(x_\star,y_\star) + y_\star\left(\frac{\partial_y g_+ - \partial_y g_- }{1+hg_+}\right)(x_\star,y_\star) \right) \\
& + h^2 \frac{Det\left(D\varphi(x_\star,y_\star)\right)}{(1+hf_+)(1+hg_+)(x_\star,y_\star)}
\end{align*}
We verify that 
\begin{equation}
\label{mainequa}
1-T_{\phi_{NS,h}} +D_{\phi_{NS,h}} = h^2 \frac{D}{(1+hf_+)(1+hg_+)(x_\star,y_\star)} .
\end{equation}

The eigenvalues of $D\varphi_{NS,h}(x_\star,y_\star)$ are the roots of the quadratic equation $\gamma^2-T_{\phi_{NS,h}} \gamma+ D_{\phi_{NS,h}}=0$. By Lemma \ref*{lemma_roots} we preserve stability if and only if conditions (a), (b) and (c) are satisfied. We begin with conditions (b) and (c). \\

{\bf Condition $(b)$} : We have for all $h>0$ that $(1+hf_+)(1+hg_+)(x_\star,y_\star) >0$ by definition of $f_+$ and $g_+$. As a consequence, condition (b) is equivalent to $D>0$. By assumption, the point $(x_\star,y_\star)$ is a stable point of (E) so that $D>0$. Condition (b) is then satisfied for all $h>0$.\\

{\bf Condition $(c)$} : We have 
\begin{equation}
D_{\phi_{NS,h}} = 1+\di\frac{h}{(1+hf_+)(1+hg_+)(x_\star,y_\star)} 
\left [
T+h(D+C)
\right ] ,
\end{equation}
where $C$ is given by 
\begin{equation}
C= x_\star\left[(\partial_x f_+ - \partial_x f_-)g_+\right](x_\star,y_\star)+\left[y_\star(\partial_y g_+ - \partial_y g_-)f_+\right](x_\star,y_\star) .
\end{equation}

The quantity $h/ [(1+hf_+)(1+hg_+)](x_\star,y_\star)$ is always positive for $h>0$. Moreover, as the equilibrium point $(x_\star,y_\star)$ is stable, we have $T<0$. As a consequence, the condition $D_{\phi_{NS,h}} <1$ is satisfied for arbitrary $h>0$ or $h$ sufficiently small depending on the sign of $D+C$. Precisely, we must have $T+h(D+C) <0$. We have $D>0$ by the stability assumption but no information on the sign of $C$. If $D+C\leq 0$ the condition is fulfilled for all $h>0$ and if $D+C>0$ we must have 
\begin{equation}
h<- \di\frac{T}{D+C} .
\end{equation}

{\bf Condition $(a)$} : We have 
\begin{equation}
1+T_{\phi_{NS,h}} +D_{\phi_{NS,h}} =4 +\di\frac{2h}{(1+hf_+)(1+hg_+)](x_\star,y_\star)} 
\left [
T+h(C+D) 
\right ] 
,
\end{equation}
using the previous notations.\\

By condition (c), we have $T+h(C+D) <0$. If all the cases, we must have $h$ sufficiently small in order to ensure condition (a). Indeed, if $T+h(C+D)$ is strictly negative unconditionally on $h>0$ then condition (a) is satisfied for $h< -2 \di\frac{(1+hf_+)(1+hg_+)](x_\star,y_\star)}{T}$. If $T+h(C+D) <\alpha <0$ for $h<h_0$ where $\alpha$ does not depend on $h$ then $h< -2 \di\frac{(1+hf_+)(1+hg_+)](x_\star,y_\star)}{\alpha}$.\\

This concludes the proof.

\end{appendix}

\end{document}